\let\mathg\mathfrak
\theoremstyle{plain}
\newtheorem{cor}{Corollary}[section]
\newtheorem{lem}{Lemma}[section]
\newtheorem{thm}{Theorem}[section]
\newtheorem{prop}{Proposition}[section]
\theoremstyle{definition}
\newtheorem{exa}{Example}[section]
\newtheorem{NB}{Remark}[section]
\newcommand{\bdm}{\begin{displaymath}}
\newcommand{\edm}{\end{displaymath}}
\newcommand{\be}{\begin{equation}}
\newcommand{\ee}{\end{equation}}
\newcommand{\ba}[1]{\begin{array}{#1}}
\newcommand{\ea}{\end{array}}
\newcommand{\bea}[1][]{\begin{eqnarray#1}}
\newcommand{\eea}[1][]{\end{eqnarray#1}}
\newcommand{\btab}{\begin{tabular}}
\newcommand{\etab}{\end{tabular}}
\newcommand{\x}{\times}
\newcommand{\ox}{\otimes}
\newcommand{\ra}{\rightarrow}
\newcommand{\lra}{\longrightarrow}
\newcommand{\lmapsto}{\longmapsto}
\newcommand{\Id}{\ensuremath{\mathrm{Id}}}
\newcommand{\tr}{\ensuremath{\mathrm{tr}}}
\newcommand{\C}{\ensuremath{\mathbb{C}}}
\newcommand{\R}{\ensuremath{\mathbb{R}}}
\newcommand{\Z}{\ensuremath{\mathbb{Z}}}
\renewcommand{\P}{\ensuremath{\mathbb{P}}}
\newcommand{\HP}{\ensuremath{\mathbb{HP}}}
\newcommand{\CP}{\ensuremath{\mathbb{CP}}}
\newcommand{\eps}{\ensuremath{\varepsilon}} 
\newcommand{\vphi}{\ensuremath{\varphi}}    
\newcommand{\vrho}{\ensuremath{\varrho}}
\newcommand{\End}{\ensuremath{\mathrm{End}}}
\newcommand{\Ric}{\ensuremath{\mathrm{Ric}}}
\newcommand{\Scal}{\ensuremath{\mathrm{Scal}}}
\newcommand{\Ad}{\ensuremath{\mathrm{Ad}\,}}
\newcommand{\diag}{\ensuremath{\mathrm{diag}}}
\newcommand{\st}{\ensuremath{\mathrm{st}}}
\newcommand{\ir}{\ensuremath{\mathrm{ir}}}
\newcommand{\slin}{\ensuremath{\mathg{sl}}}
\newcommand{\SL}{\ensuremath{\mathrm{SL}}}
\newcommand{\su}{\ensuremath{\mathg{su}}}
\newcommand{\SU}{\ensuremath{\mathrm{SU}}}
\newcommand{\Sympl}{\ensuremath{\mathrm{Sp}}}
\newcommand{\so}{\ensuremath{\mathg{so}}}
\newcommand{\SO}{\ensuremath{\mathrm{SO}}}
\newcommand{\Spin}{\ensuremath{\mathrm{Spin}}}
\newcommand{\g}{\ensuremath{\mathfrak{g}}}
\newcommand{\h}{\ensuremath{\mathfrak{h}}}
\newcommand{\m}{\ensuremath{\mathfrak{m}}}
\newcommand{\n}{\ensuremath{\mathfrak{n}}}
\newcommand{\nms}{\!\!}%
\begin{document}
\def\haken{\mathbin{\hbox to 6pt{%
                 \vrule height0.4pt width5pt depth0pt
                 \kern-.4pt
                 \vrule height6pt width0.4pt depth0pt\hss}}}
    \let \hook\intprod
\setcounter{equation}{0}
%
%
\thispagestyle{empty}
%
\date{\today}
\title[$\SO(3)$-manifolds]{On the topology and the geometry of $\SO(3)$-manifolds}
%
%
%
\author{Ilka Agricola}
\author{Julia Becker-Bender}
\author{thomas Friedrich}
\address{\hspace{-5mm} 
Ilka Agricola, Julia Becker-Bender \newline
Fachbereich Mathematik und Informatik \newline
Philipps-Universit\"at Marburg\newline
Hans-Meerwein-Strasse \newline
D-35032 Marburg, Germany\newline
{\normalfont\ttfamily agricola@mathematik.uni-marburg.de}\newline
{\normalfont\ttfamily beckbend@mathematik.uni-marburg.de}}
\address{\hspace{-5mm} 
Thomas Friedrich\newline
Institut f\"ur Mathematik \newline
Humboldt-Universit\"at zu Berlin\newline
Sitz: WBC Adlershof\newline
D-10099 Berlin, Germany\newline
{\normalfont\ttfamily friedric@mathematik.hu-berlin.de}}
%
%
\thanks{Supported by the Junior Research Group "Special Geometries in Mathematical Physics"
of the Volkswagen\textbf{Stiftung}.}
\subjclass[2000]{Primary 53 C 25; Secondary 81 T 30}
\keywords{flat connections, 
skew-symmetric torsion}  
\begin{abstract}
Consider the nonstandard embedding of $\SO(3)$ into $\SO(5)$ given by
the $5$-dimensional irreducible representation of $\SO(3)$, henceforth
called $\SO(3)_\ir$. In this note, we study the 
topology and the differential geometry
of $5$-dimensional Riemannian manifolds carrying such an $\SO(3)_\ir$ 
structure, 
i.\,e.~with a reduction of the frame bundle to $\SO(3)_\ir$. 
\end{abstract}
\maketitle
\pagestyle{headings}
%
%
%
\section{Introduction}\noindent
%
We consider the nonstandard embedding of $\SO(3)$ into $\SO(5)$ given by
the $5$-dimensional irreducible representation of $\SO(3)$, henceforth
called $\SO(3)_\ir$.
In this note, we investigate the topology and the differential geometry
of $5$-dimensional Riemannian manifolds carrying such an $\SO(3)_\ir$ 
structure, 
i.\,e.~with a reduction of the frame bundle to $\SO(3)_\ir$. 
These spaces are the 
non-integrable analogues of the symmetric space $\SU(3)/\SO(3)$
and its non-compact dual $\SL(3,\R)/\SO(3)$. 
While the general frame work for the investigation of such structures 
was outlined in \cite{Fri2}, first concrete results were 
obtained by M.~Bobienski and P.~Nurowski (general theory, \cite{Bobienski&N07})
as well as S.\,G.~Chiossi and A.~Fino ($\SO(3)_\ir$ structures on 
$5$-dimensional Lie groups, \cite{Chiossi&F}).

In the first part of the paper, we describe the topological
properties of the two different types of $\SO(3)$ structures.
While classical results by  E.~Thomas and M.~Atiyah are available
for the standard diagonal embedding of $\SO(3)_\st \ra \SO(5)$, the
case of $\SO(3)_\ir$  is first investigated in this paper.
We show that the symmetric space $\SU(3)/SO(3)$ admits a
 $\SO(3)_\ir$ structure, but 
no $\SO(3)_\st$ structure. We prove necessary
relations for the characteristic classes of a $5$-manifold
with a topological $\SO(3)_\ir$  structure: its first
Pontrjagin class $p_1(M)$ has to be divisible by five,
the Stiefel-Whitney classes $w_1(M),\, w_4(M),\, w_5(M)$ vanish etc.
Moreover, a simply-connected
$\SO(3)_\ir$-manifold that is spin is automatically parallelizable.
We construct explicit examples of $S^1$-fibrations over a 
$4$-dimensional base that admit a topological $\SO(3)_\ir$ structure.

In the second part, the differential geometry of some homogeneous
examples is studied in detail. We will focus on a `twisted' Stiefel manifold
$V^\ir_{2,4}=\SO(3)\x \SO(3)/ \SO(2)_\ir$, its non compact partner
$\tilde{V}^\ir_{2,4}=\SO(2,1)\x \SO(3)/ \SO(2)_\ir$ and the
space $W^\ir=\R\x (\SL(2,\R)\ltimes\R^2)/ \SO(2)_\ir$.
On each of these, a family of metrics depending on three
deformation parameters $\alpha,\beta,\gamma$ is considered; in the
case $W^\ir$ it is in addition necessary to consider a family
of possibble embeddings of $\SO(2)_\ir$ into $\R\x (\SL(2,\R)\ltimes\R^2)$,
as the ones leading to $\SO(3)_\ir$ structures are far from trivial.
The standard Stiefel manifold is known to admit an Einstein-Sasaki metric
that was crucial for the understanding of Riemannian Killing spinors.
In contrast, we show that the twisted Stiefel manifold admits
a nearly integrable $\SO(3)_\ir$ structure with parallel torsion
(it can even be naturally reductive for some parameters of the metric),
a compatible Sasaki structure whose contact connection coincides
with the $\SO(3)_\ir$ connection, but none of
the Sasaki structures is Einstein (however, an Einstein metric is shown to 
exist). All in all, the twisted Stiefel manifold is an example
of a rather well-behaved $\SO(3)_\ir$-manifold.
The manifold $W^\ir$ carries an $\SO(3)_\ir$ structure that disproves several
conjectures on $\SO(3)_\ir$-manifolds that one might be tempted to conclude
from the previous example. It carries a  $\SO(3)_\ir$ structure only for two
possible embeddings of $\SO(2)_\ir$ that depend on the parameters  
$\alpha,\beta,\gamma$ of the metric. The torsion of the $\SO(3)_\ir$
connection turns out to be non parallel, the space is never Einstein and
never naturally reductive, and there does not exist a compatible 
contact structure whose contact connection would coincide with the
$\SO(3)_\ir$ connection. In particular, this shows that a $\SO(3)_\ir$ 
structure is conceptionally really different from
a contact structure; it thus defines a new type of geometry
on $5$-manifolds.

\section{General remarks on $\SO(3)$ structures}\noindent
%

The Lie group $\SO(3)$ admits two inequivalent embeddings into $\SO(5)$.
The standard embedding is as upper diagonal matrices, 
$\SO(3)_\st\subset \SO(5),\ A\mapsto \diag(A,1,1)$, while the second
embedding corresponds to the unique faithful irreducible $5$-dimensional 
representation of  $\SO(3)$ and will henceforth be denoted 
by $\SO(3)_\ir\subset\SO(5)$. A realization of this representation which is
particular 
adapted to the spirit of this note is by conjugation on symmetric trace free 
endomorphisms of $\R^3$, denoted by  $S^2_0(\R^3)$, 
\bdm
\vrho(h)X\ :=\ hXh^{-1} \text{ for }h\in\SO(3),\ X\in S^2_0(\R^3)\ \cong\ \R^5.
\edm
If we choose the following basis for $S^2_0(\R^3)$, 
\bdm
X\ =\ \sum_{i=1}^5 x_i e_i\ =\ 
\begin{bmatrix} \frac{x_1}{\sqrt{3}} -x_5 & x_4 & x_2\\
x_4 & \frac{x_1}{\sqrt{3}} +x_5&x_3\\ 
x_2 & x_3 & -2\frac{x_1}{\sqrt{3}}\end{bmatrix},
\edm
and denote by $E_{ij}$ the endomorphism sending $e_i$ to $e_j$, $e_j$ to $-e_i$
and everything else to zero, the Lie algebras of the two embeddings 
above are spanned by the  bases
\begin{gather*}
\so(3)_\st\ =\ \langle s_1 :=E_{23},\, s_2:=E_{31},\, s_3:=E_{12}
\rangle,\quad
\so(3)_\ir\ =\ \langle X_1,\, X_2,\, X_3\rangle,\\
X_1=\vrho (s_1)= \sqrt{3}\,E_{13}+E_{42}+E_{53},\,
X_2=\vrho (s_2) = \sqrt{3}\,E_{21}+E_{34}+E_{52},\, X_3=\vrho (s_3)=E_{23}
+2 E_{45} .
\end{gather*}
By definition, a $\SO(3)_\st$ resp.~$\SO(3)_\ir$ structure on a $5$-manifold
is a reduction of its frame bundle to a subgroup $\SO(3)\subset \SO(5)$
isomorphic to  $\SO(3)_\st$ resp.~$\SO(3)_\ir$. The first example of a 
manifold with a $\SO(3)_\ir$ structure is the Riemannian symmetric space 
$\SU(3)/SO(3)$ with its natural Sasaki-Einstein metric, see 
\cite{Bobienski&N07} for a detailed description.

One crucial observation of \cite{Bobienski&N07} is that $\so(3)_\ir$ may
be characterized as being the isotropy group of of a symmetric 
$(3,0)$-tensor $\Upsilon$ on $\R^5$. Basically, this symmetric tensor
is one of the coefficients of the characteristic polynomial of
$X\in S^2_0(\R^3)$, more precisely,
\bdm
\det (X-\lambda\,\Id)\ =\ -\lambda^3+g(X,X)\lambda -\frac{2\sqrt{3}}{9}\,
\Upsilon(X,X,X).
\edm
The coordinates are chosen in such a way that the bilinear form
$g$ takes the simple expression $g(X,X)=\sum_{1}^5 x_1^2$, while $\Upsilon$
is the homogeneous polynomial
\bdm
\Upsilon(X,X,X)\ =\ x_1^3+\frac{3}{2}x_1(x_2^2+x_3^2-2x_4^2-2x_5^2)
+\frac{3\sqrt{3}}{2}(x_2^2-x_3^2)x_5 -3\sqrt{3}\, x_2 x_3 x_4.
\edm
In fact, a $\SO(3)_\ir$ structure on a  $5$-dimensional Riemannian manifold 
$(M^5,g)$ can equally be characterised as being a rank $3$ tensor field
$\Upsilon$ for which the associated linear map
$TM\ra \End(TM),\ v\mapsto \Upsilon_v$ defined by 
$(\Upsilon_v)_{ij}=\Upsilon_{ijk}v_k$ satisfies
\begin{enumerate}
\item it is totally symmetric: $g(u,\Upsilon_v  w)= g(w,\Upsilon_v u)=
g(u,\Upsilon_w v)$,
\item it is trace-free: $\tr\Upsilon_v=0$,
\item it reconstructs the metric: $\Upsilon^2_v v =g(v,v)v$.
\end{enumerate}
Recall that for a $G$ structure, a metric connection $\nabla$ 
is called a \emph{characteristic connection} if it is a $G$ connection 
whose torsion is totally antisymmetric \cite{Fri2}.  
\begin{thm}[{\cite{Bobienski&N07}}]
A $\SO(3)_\ir$ structure $(M,g,\Upsilon)$ can only admit a characteristic 
connection if it is \emph{nearly integrable}, i.\,e.~if the 
tensor $\Upsilon$ satisfies $(\nabla^g_v\Upsilon)(v,v,v)=$ for all
vector fields $v$. In this case, the torsion of the characteristic
connection is of algebraic type $\Lambda^3(\R^5)\cong\Lambda^2(\R^5)\cong
\so(5)=\so(3)_\ir \oplus\n$.
\end{thm}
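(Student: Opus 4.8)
The plan is to reduce the existence of a characteristic connection to a pointwise algebraic equation relating $\nabla^g\Upsilon$ to the torsion, and then to extract the nearly integrable condition by a complete symmetrisation. First I would write any metric connection with skew torsion in the form $\nabla_XY=\nabla^g_XY+A_XY$, where $A_X\in\so(5)$ is the skew-symmetric endomorphism determined by $g(A_XY,Z)=\tfrac12\,T(X,Y,Z)$ and $T\in\Lambda^3(\R^5)$ is the torsion $3$-form. Since $\SO(3)_\ir$ is by definition the stabiliser of $\Upsilon$, such a $\nabla$ is an $\SO(3)_\ir$ connection precisely when it preserves the defining tensor, i.\,e.~when $\nabla\Upsilon=0$. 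Writing this out, and using that $\nabla^g_X\Upsilon$ is again a symmetric $3$-tensor (because $\Upsilon$ is symmetric and $\nabla^g$ acts as a derivation), the reduction condition becomes the pointwise equation $\nabla^g_X\Upsilon=-A_X\cdot\Upsilon$ for every $X$, where $\xi\cdot\Upsilon$ denotes the natural action of $\xi\in\so(5)$ on $S^3(\R^5)$. Thus a characteristic connection exists if and only if $\nabla^g\Upsilon$, viewed as an element of $\R^5\otimes S^3(\R^5)$, lies in the image of the torsion map $T\mapsto(X\mapsto-\tfrac12\,T_X\cdot\Upsilon)$.

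The core of the necessity statement is then a symmetrisation argument. I would contract the above identity in all four slots with the same vector $v$. The left-hand side gives $(\nabla^g_v\Upsilon)(v,v,v)$, which is exactly the complete symmetrisation of $\nabla^g\Upsilon$ into $S^4(\R^5)$. On the right-hand side, $(A_v\cdot\Upsilon)(v,v,v)=-3\,\Upsilon(A_vv,v,v)$, and here the key observation is that $A_vv=0$: indeed $g(A_vv,Z)=\tfrac12\,T(v,v,Z)=0$ because $T$ is totally antisymmetric. Hence the torsion contribution drops out entirely and one is left with $(\nabla^g_v\Upsilon)(v,v,v)=0$ for all $v$, which is precisely the nearly integrable condition. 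Equivalently, the image of the torsion map is always contained in the complement $W$ of $S^4(\R^5)$ inside $\R^5\otimes S^3(\R^5)$, so the vanishing of the $S^4$-component of $\nabla^g\Upsilon$ is forced by the mere existence of $T$.

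The statement about the algebraic type of the torsion is then representation-theoretic. By hypothesis the characteristic torsion $T$ is totally antisymmetric, hence a section of $\Lambda^3(\R^5)$; via the Hodge star in dimension five one has $\Lambda^3(\R^5)\cong\Lambda^2(\R^5)$, and the metric identifies $\Lambda^2(\R^5)\cong\so(5)$. Under the reduction to $\SO(3)_\ir$ this Lie algebra splits as an $\SO(3)_\ir$-module into the adjoint summand and its orthogonal complement, $\so(5)=\so(3)_\ir\oplus\n$, of dimensions $3$ and $7$, which is the claimed decomposition.

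I expect the genuinely delicate point to be the converse (not asserted in the present statement, but needed to make the characterisation sharp): showing that for a nearly integrable structure the equation $\nabla^g\Upsilon=-A\cdot\Upsilon$ is actually \emph{solvable} in $T\in\Lambda^3(\R^5)$, i.e.~that the torsion map surjects onto $W$. This amounts to comparing the $\SO(3)_\ir$-module structures of $\Lambda^3(\R^5)$ and of $\R^5\otimes S^3(\R^5)$, and it is precisely here that the explicit form of $\Upsilon$ and the injectivity of $\xi\mapsto\xi\cdot\Upsilon$ on $\n$ would have to be used; the symmetrisation trick above handles the necessity cleanly, but the module-theoretic bookkeeping is where the real work lies.
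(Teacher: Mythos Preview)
The paper does not give its own proof of this theorem: it is simply quoted from \cite{Bobienski&N07} and followed only by a remark comparing the situation to nearly K\"ahler geometry. So there is no in-paper argument against which to compare your proposal.

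That said, your argument for the necessity direction is correct and is the standard one. Writing a metric connection with skew torsion as $\nabla=\nabla^g+\tfrac12 T$, the $\SO(3)_\ir$ condition $\nabla\Upsilon=0$ becomes $(\nabla^g_X\Upsilon)=-A_X\cdot\Upsilon$, and evaluating at $(v,v,v,v)$ kills the right-hand side because $A_vv=\tfrac12\,T(v,v,\cdot)=0$. This yields $(\nabla^g_v\Upsilon)(v,v,v)=0$, i.e.\ nearly integrable. The identification $\Lambda^3(\R^5)\cong\Lambda^2(\R^5)\cong\so(5)=\so(3)_\ir\oplus\n$ via Hodge star and the metric is likewise correct and is exactly how the paper phrases the torsion type. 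Your closing paragraph is also on point: the statement as formulated here is only the necessity, and the genuine representation-theoretic work---showing that the torsion map $\Lambda^3(\R^5)\to W\subset\R^5\otimes S^3(\R^5)$ is onto, so that a nearly integrable structure actually admits a characteristic connection---is carried out in the cited reference, not in the present paper.
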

The analogy to the definition of a nearly K\"ahler manifold is
evident. However, contrary to nearly K\"ahler manifold
(see \cite{Kirichenko77}, \cite{Alexandrov&F&S04}), the torsion
$T$ of the characteristic connection of a nearly integrable 
$\SO(3)_\ir$ structure is not always parallel. Examples will be discussed in
Section 4.
%
\section{Topological existence conditions}
%
Necessary and sufficient conditions for the existence of a 
$\SO(3)_\st$ structure on an oriented $5$-manifold were investigated in
the late 1960ies. In fact, the existence of such a structure is
equivalent to the existence of two global linearly independent vector fields.
Recalling that the Kervaire semi-characteristic is defined by
\bdm
k(M^5)\ :=\ \sum_{i=0}^2\dim_\R H^{2i}(M^5;\R) \quad \bmod{2},
\edm
one has the following classical result:
\begin{thm}[{\cite{Thomas68}, \cite{Atiyah70}}]
A $5$-dimensional compact oriented manifold admits  
two global linearly independent vector fields if and only if
\bdm
w_4(M^5)\ =\ 0 \ \text{ and }\ k(M^5)\ =\ 0.
\edm
\end{thm}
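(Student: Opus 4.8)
The plan is to recast the statement as a section problem and to run the standard skeleton-by-skeleton obstruction argument. Two pointwise linearly independent vector fields on $M^5$ amount to a section of the bundle $V_2(TM)$ of $2$-frames in $TM$, a fibre bundle over $M$ whose typical fibre is the Stiefel manifold $V_{5,2}=\SO(5)/\SO(3)_\st$ (the stabiliser of a $2$-frame in $\SO(5)$ being exactly the standard $\SO(3)$). Since $V_{5,2}$ is $(5-2-1)=2$-connected, a section always exists over the $3$-skeleton of $M$, and the whole question reduces to pushing such a section over the $4$- and $5$-cells.

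The primary obstruction lives in $H^4(M;\pi_3(V_{5,2}))$. Because $5-2=3$ is odd and $k=2\ge 2$, one has $\pi_3(V_{5,2})\cong\Z_2$, and Steenrod's computation of the primary obstruction to a $k$-frame field identifies this class with the Stiefel--Whitney class $w_4(M)$. Thus $w_4(M)=0$ is exactly the condition that a section extend over the $4$-skeleton, which already accounts for the first half of the criterion.

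Assuming $w_4(M)=0$, I would fix an extension over the $4$-skeleton and study the secondary obstruction in $H^5(M;\pi_4(V_{5,2}))$. First I need $\pi_4(V_{5,2})$, which I would read off from the long exact sequence of the unit tangent sphere bundle $S^3\inj V_{5,2}\ra S^4$: the connecting map $\pi_4(S^4)\ra\pi_3(S^3)$ is multiplication by the Euler number of $S^4$, hence by $2$, so that $\pi_4(V_{5,2})$ is a quotient of $\pi_4(S^3)\cong\Z_2$; one checks that the next connecting map $\pi_5(S^4)\ra\pi_4(S^3)$ vanishes, so $\pi_4(V_{5,2})\cong\Z_2$. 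Since $M$ is closed and oriented, $H^5(M;\Z_2)\cong\Z_2$, and the secondary obstruction is a single $\Z_2$-valued invariant, defined modulo the indeterminacy coming from the different extensions over the $4$-skeleton.

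The crux --- and the step I expect to be genuinely hard --- is to identify this $\Z_2$-valued secondary obstruction with the Kervaire semi-characteristic $k(M^5)$. I would pursue the two historically complementary routes. On the obstruction-theoretic side (Thomas), one expresses the obstruction through a secondary cohomology operation, evaluates it against the fundamental class, and checks that the indeterminacy vanishes so that a single element of $\Z_2$ survives. On the index-theoretic side (Atiyah), one realizes $\sum_{i=0}^2\dim_\R H^{2i}(M;\R)\bmod 2$ as the mod-$2$ index of a suitable real elliptic operator, in the spirit of the Atiyah--Singer mod-$2$ index theorem, and shows that the vanishing of this index is precisely the condition for the frame field to exist; Poincar\'e duality, which already makes the even-degree and the low-degree versions of $k(M^5)$ agree, is what singles out the semi-characteristic as the natural target. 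Granting this identification, necessity is immediate because an honest section forces both obstructions to vanish, while sufficiency follows because $w_4(M)=0$ and $k(M^5)=0$ kill the primary and the secondary obstruction in turn.
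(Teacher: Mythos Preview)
The paper does not prove this theorem: it is quoted as a classical result of Thomas and Atiyah and used as input for the subsequent discussion of $\SO(3)_\st$ structures. There is therefore no proof in the paper to compare your proposal against.

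That said, your outline is a faithful sketch of how the original arguments proceed. The reduction to a section problem for the $V_{5,2}$-bundle, the identification of the primary obstruction in $H^4(M;\pi_3(V_{5,2}))\cong H^4(M;\Z_2)$ with $w_4(M)$, and the appearance of a single $\Z_2$-valued secondary obstruction in $H^5(M;\pi_4(V_{5,2}))\cong\Z_2$ are all correct; the paper itself later uses $\pi_4(V_{5,2})=\Z_2$ (citing Paechter) in the proof of Lemma~\ref{lem:ind-hom}. You are also right that the substantive work lies entirely in identifying the secondary obstruction with $k(M^5)$ and that this is the step carried out, by different methods, in \cite{Thomas68} and \cite{Atiyah70}. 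Your proposal is an honest outline rather than a proof, but nothing in it is wrong, and you have correctly flagged where the real content lives.
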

There is a second semi-characteristic,
\bdm
\hat{\chi}_2(M^5) \ := \ \sum_{i=0}^{2} \mathrm{dim}_{\Z_2}H_{i}(M^5 ;
\Z_2) \quad \bmod{2}  .
\edm
The Lusztig-Milnor-Peterson formula \cite{LusztigMP69} establishes the link
between these two semi-characteristics,
\bdm
k(M^5)\, - \, \hat{\chi}_2(M^5) =  w_2(M^5) \cup w_3(M^5) \, .
\edm
In particular, if $M^5$ is spin or $w_3(M^5) = 0$, then $k(M^5)=  
\hat{\chi}_2(M^5)$.\\

In \cite{Bobienski&N07} and \cite{Bobienski06}, it was claimed that
the existence of a $\SO(3)_\ir$ structure is equivalent to
the existence of a $\SO(3)_\st$ structure and the divisibility of the
first integral Pontrjagin class by five. However, the symmetric space 
$\SU(3)/\SO(3)$ is known to have a $\SO(3)_\ir$ structure, but it does 
not have a $\SO(3)_\st$ structure.
\begin{exa}
The symmetric space $M^5:= \SU(3)/\SO(3)$ admits a $\SO(3)_\ir$ structure, but 
no $\SO(3)_\st$ structure.
\end{exa}
\begin{proof}
Let us start by computing the isotropy representation of $M^5$. 
We choose as a basis of $\so(3)$ the elements $a_1 :=E_{23},\, a_2:=E_{31},
\, a_3:=E_{12}$ and complete it to a basis of $\su(3)$ by choosing
\begin{gather*}
b_1\ =\ i\,\begin{bmatrix}  0&1&0\\ 1&0&0\\ 0&0&0\end{bmatrix},\quad
b_2\ =\ i\,\begin{bmatrix}  0&0&1\\ 0&0&0\\ 1&0&0\end{bmatrix},\quad
b_3\ =\ i\,\begin{bmatrix}  0&0&0\\ 0&0&1\\ 0&1&0\end{bmatrix},\\
b_4\ =\ i\,\begin{bmatrix}  1&0&0\\ 0&-1&0\\ 0&0&0\end{bmatrix},\quad
b_5\ =\ \frac{i}{\sqrt{3}}\,\begin{bmatrix}  1&0&0\\ 0&1&0\\ 0&0&-2
\end{bmatrix}.
\end{gather*}
In this basis, the isotropy representation $\lambda:\so(3)\ra\so(5)$
is given by
\bdm
\lambda(a_1)\ =\ E_{12}+E_{34}-\sqrt{3}\,E_{35},\ \ 
\lambda(a_2)\ =\ -E_{13}+E_{24}+\sqrt{3}\,E_{25},\ \ 
\lambda(a_3)\ =\ -2\, E_{14}+E_{23}.
\edm
This representation is irreducible, hence isomorphic to the
$5$-dimensional irreducible representation of $\so(3)$.
$M^5$ cannot
admit a $\SO(3)_\st$ structure as claimed. Indeed, $M^5$ is a rational
homology sphere and a computation of the
$\Z_2$-cohomology yields the following result:
\bdm
H^1(M^5 ; \Z_2) \ = \ H^4(M^5 ; \Z_2) \ = \ 0 \, , \quad
H^2(M^5 ; \Z_2) \ = \ H^3(M^5 ; \Z_2) \ = \ Z_2 \ .
\edm
Consequently, we obtain $k(M^5)= 1$ and $\hat{\chi}_2(M^5)= 0$. Moreover, $M^5$ does not admit any $spin^{\C}$ structure (see
\cite{Dirac-Buch}, page $50$).
\end{proof}
Our description of the irreducible representation of $\SO(3)$ on $S^2_0(\R^3)$
implies the following characterization:
\begin{lem}\label{lem:endom-bundle}
A $5$-manifold $M^5$ admits a $\SO(3)_\ir$ structure if and only if there
exists a $3$-dimensional oriented vector bundle
$E^3\ra M^5$ such that the tangent bundle $TM^5$ is isomorphic to the
bundle of symmetric trace free endomorphisms of $E^3$, 
$TM^5\cong S^2_0(E^3)$.
\end{lem}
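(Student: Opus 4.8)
The plan is to prove both implications by translating the defining data of an $\SO(3)_\ir$ structure—namely the reduction of the frame bundle to $\SO(3)_\ir\subset\SO(5)$—into the associated bundle language, exploiting the fact that $\SO(3)_\ir$ acts on $\R^5\cong S^2_0(\R^3)$ precisely as $\SO(3)$ acts by conjugation on symmetric trace-free endomorphisms. The essential observation, already recorded in the excerpt, is that the representation $\vrho\colon\SO(3)\to\SO(5)$ realizing $\SO(3)_\ir$ factors through the action of $\SO(3)$ on the \emph{oriented} orthonormal $3$-space underlying $E^3$; thus the image group $\SO(3)_\ir$ is the image of $\SO(3)$ acting on $S^2_0(\R^3)$, and a $5$-dimensional orthogonal representation of an abstract $\SO(3)$-bundle gives exactly a bundle of symmetric trace-free endomorphisms.

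For the forward direction, I would start from an $\SO(3)_\ir$ reduction, i.\,e.~a principal $\SO(3)$-bundle $P\to M^5$ together with an isomorphism of its associated $\R^5$-bundle (via $\vrho$) with $TM^5$. I would then define $E^3$ to be the vector bundle associated to the \emph{same} principal bundle $P$ via the standard $3$-dimensional representation of $\SO(3)$ on $\R^3$. Since $\SO(3)$ acts by orientation-preserving isometries on $\R^3$, the bundle $E^3$ is oriented and of rank $3$. The key point is that the $\SO(3)$-equivariant isomorphism $\R^5\cong S^2_0(\R^3)$ of representation spaces induces a bundle isomorphism $TM^5\cong S^2_0(E^3)$, because associated-bundle construction is functorial with respect to equivariant maps of fibres. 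Concretely, a local frame of $P$ simultaneously trivializes $TM^5$ as $S^2_0(\R^3)$ and $E^3$ as $\R^3$, and the transition functions—lying in $\SO(3)$—act compatibly on both sides via $\vrho$ and the standard representation, so the identification is globally well defined.

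For the converse, I would assume $TM^5\cong S^2_0(E^3)$ for an oriented rank-$3$ bundle $E^3$, equipped with a fibre metric (which exists since $M$ is Riemannian and the isomorphism transports the metric, or one can choose any and average). From the oriented orthonormal frame bundle of $E^3$, which is a principal $\SO(3)$-bundle $P$, the bundle $S^2_0(E^3)$ is exactly the associated bundle $P\times_{\vrho}\R^5$; hence the isomorphism $TM^5\cong S^2_0(E^3)$ furnishes a reduction of the frame bundle of $M^5$ to $\SO(3)_\ir$, which is by definition an $\SO(3)_\ir$ structure. One must check that the metric on $TM^5$ matches the standard inner product on $S^2_0(\R^3)$, but since both are $\SO(3)$-invariant and $\R^5$ is an irreducible $\SO(3)$-module, Schur's lemma forces them to agree up to scale, which can be absorbed by normalizing the metric on $E^3$.

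The main obstacle, and the point deserving the most care, is verifying that the orthogonal and algebraic structure carried by $S^2_0(\R^3)$—in particular the metric $g$ and the cubic tensor $\Upsilon$ of the three-point characterization in the excerpt—are correctly preserved under the associated-bundle construction, so that what one recovers is genuinely an $\SO(3)_\ir$ structure in the sense of a reduction to the \emph{specific} subgroup $\SO(3)_\ir$ and not merely to some conjugate copy of $\SO(3)$ inside $\SO(5)$. Since $\SO(3)_\ir$ is precisely the stabilizer of $\Upsilon$, it suffices to observe that the equivariant identification $\R^5\cong S^2_0(\R^3)$ carries the fixed tensor $\Upsilon$ to the determinant-type invariant described in the excerpt; this is a purely representation-theoretic fact about the irreducible $5$-dimensional $\SO(3)$-module and requires no manifold-level computation. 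Everything else reduces to the functoriality of $P\times_{\SO(3)}(-)$ applied to the $\SO(3)$-equivariant maps between $\R^3$, $\R^5$, and $S^2_0(\R^3)$.
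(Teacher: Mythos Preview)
Your proposal is correct and is precisely the argument the paper leaves implicit: the lemma is stated in the paper without a formal proof, introduced only by the sentence ``Our description of the irreducible representation of $\SO(3)$ on $S^2_0(\R^3)$ implies the following characterization,'' and your associated-bundle argument is exactly the natural way to unpack that remark. The only difference is that you spell out the functoriality and the Schur-lemma normalization of the metric, which the paper takes for granted.
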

This characterization allows to formulate some  necessary topological
conditions for the existence of $\SO(3)_\ir$ structures.
\begin{thm}\label{thm.char-classes}
Suppose that $M^5$ admits a $\SO(3)_\ir$ structure and that $E^3$ is
a vector bundle over $M^5$ as just described. Then the following
relations hold:
\begin{enumerate}
\item $p_1(M^5)=5\, p_1(E^3)  \in H^4(M^5;\Z)$. \\
In particular, the first
  Pontrjagin class $p_1(M^5)$ is divisible by five.
\item $w_1(M^5)=0,\ w_4(M^5)=0,\ w_5(M^5)=0$,
\item $w_2(M^5)=w_2(E^3),\ w_3(M^5)=w_3(E^3)$.
\end{enumerate}
\end{thm}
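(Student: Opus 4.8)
The plan is to exploit Lemma~\ref{lem:endom-bundle}, which identifies $TM^5\cong S^2_0(E^3)$ for some oriented rank-$3$ bundle $E^3$, and to compute the characteristic classes of $S^2_0(E^3)$ in terms of those of $E^3$. The key tool is the splitting principle: one may pretend that $E^3$ splits as a sum of real line bundles, or rather (since $E^3$ is oriented of odd rank) that after complexification $E^3\otimes\C$ has Chern roots $\pm t,\, 0$, i.e.\ formal ``weights'' for the $\SO(3)$-action. The representation $S^2_0(\R^3)$ is the $5$-dimensional irreducible representation, whose weights (with respect to the maximal torus of $\SO(3)$) are $\pm 2t,\,\pm t,\,0$. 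So the strategy is: write down the weights of $S^2_0$ in terms of the weight $t$ of the standard representation, and then read off $w_i$, $p_1$ of the associated bundle $TM^5=S^2_0(E^3)$ from these weights. I would first establish the weight computation abstractly at the representation level, then transfer it to bundles via the splitting principle and naturality of characteristic classes.

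For the \textbf{Pontrjagin class} in part $(1)$: the first Pontrjagin class is computed from the squares of the weights. The standard representation contributes $p_1(E^3)=t^2$ (up to the usual sign and identification), since its nonzero weights are $\pm t$. The representation $S^2_0$ has nonzero weights $\pm 2t,\pm t$, so its first Pontrjagin class is $(2t)^2+t^2=5t^2=5\,p_1(E^3)$. This immediately gives $p_1(M^5)=p_1(S^2_0(E^3))=5\,p_1(E^3)$, and hence divisibility by five. This part I expect to be clean; the only care needed is to fix conventions (the factor relating $p_1$ to the second symmetric function of weights) so that the factor comes out exactly $5$ and not $5$ times something.

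For the \textbf{Stiefel--Whitney classes} in parts $(2)$ and $(3)$ I would work mod $2$, again via the splitting principle but now with a single real ``Euler'' class $u=w_1(\ell)$ of a line-bundle summand, where $E^3\cong \ell\oplus\underline{\R}^{?}$ formally decomposes so that the weights become $0,\pm u$ mod $2$; since $\pm u \equiv u \pmod 2$, the total Stiefel--Whitney class of $E^3$ is $1+w_2(E^3)+w_3(E^3)$ and is governed by $u$. Decomposing $S^2_0$ into its weight summands mod $2$ and multiplying the corresponding $(1+\text{weight})$ factors, one computes $w(S^2_0(E^3))$ as a polynomial in $w_2(E^3),w_3(E^3)$; reading off degrees gives $w_1=0$ (orientability, already known), $w_2(M^5)=w_2(E^3)$, $w_3(M^5)=w_3(E^3)$, and the vanishing $w_4(M^5)=w_5(M^5)=0$. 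The \textbf{main obstacle} will be this mod-$2$ bookkeeping: unlike the Pontrjagin computation, the Stiefel--Whitney classes of an odd-rank oriented bundle do not split as nicely (the zero weight and the $\pm$ collapse mod $2$), so one must carefully track how the five weights of $S^2_0$ reduce mod $2$ and verify that the degree-$4$ and degree-$5$ terms genuinely cancel. I would double-check parts $(2)$ and $(3)$ by an independent route, computing the total Stiefel--Whitney class of $S^2_0(E^3)$ directly from the Wu formula or from the known Stiefel--Whitney classes of symmetric powers, to confirm that no higher terms survive in $H^4$ and $H^5$.
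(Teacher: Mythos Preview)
Your Pontrjagin computation is correct and is exactly what the paper does: the weights of the irreducible $5$-dimensional representation under the maximal torus of $\SO(3)$ are $0,\pm t,\pm 2t$, so $p_1(S^2_0(E^3))=t^2+(2t)^2=5t^2=5\,p_1(E^3)$.

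For the Stiefel--Whitney classes, however, your single-variable approach will not go through. If $u$ is the generator coming from the maximal torus $T=\SO(2)\subset\SO(3)$, then the restriction map $H^*(B\SO(3);\Z_2)\to H^*(BT;\Z_2)$ is \emph{not} injective: it sends $w_2\mapsto u$ (degree~$2$) and $w_3\mapsto 0$, because over $BT$ the universal rank-$3$ bundle splits off a trivial real line. So a one-variable weight computation cannot see $w_3(E^3)$ at all, and in particular cannot establish $w_3(M^5)=w_3(E^3)$. Your parenthetical ``$E^3\cong\ell\oplus\underline{\R}^{?}$ with $u=w_1(\ell)$'' runs into the same problem from another angle: if $\ell$ is a real line bundle then $w_1(E^3)=u$, contradicting orientability, and if $\ell$ is complex then $u$ lives in degree~$2$ and again cannot produce the odd-degree class $w_3$.

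The paper instead uses the full real splitting principle: pull back so that $E^3$ formally splits as three real line bundles with $w_1$'s equal to $x_1,x_2,x_3$, subject to $x_1+x_2+x_3=w_1(E^3)=0$. Then $S^2(E^3)$ has formal roots $x_i+x_j$ for $1\le i\le j\le 3$, and (since $S^2_0$ and $S^2$ differ by a trivial summand)
\[
w(TM^5)\ =\ w(S^2_0(E^3))\ =\ \prod_{1\le i\le j\le 3}(1+x_i+x_j)\,.
\]
Expanding this mod~$2$ and repeatedly using $x_1+x_2+x_3=0$ gives $w_1=w_4=w_5=0$, $w_2=\sigma_2(x)=w_2(E^3)$, and $w_3=\sigma_3(x)=w_3(E^3)$. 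This three-variable computation is exactly the ``mod-$2$ bookkeeping'' you flagged as the obstacle; once you set it up with the $x_i$ rather than a single torus weight, it is a short direct calculation.
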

\begin{proof}
We use the Borel-Hirzebruch formalism and consider, for a connected and
compact Lie group $G$ and its maximal torus $T \subset G$, the 
maps $H^*(BT)^{W} = H^*(BG) \rightarrow H^*(M^5)$ induced by the classifying
maps of the bundles $T(M^5)$ and
$E^3$. Denote by $\omega_1 , \omega_2$ the weights of $\SO(5)$ and by $\omega$
the weight of $\SO(3)$. Then we have
\bdm
1 \, + \, p_1(TM^5) \ = \ 1 \, + \, (\omega_1^2 \ + \ \omega_2^2) \, , \quad
1 \, + \, p_1(E^3) \ = \ 1 \, + \, \omega^2 \ .
\edm
The inclusion $\SO(3)_\ir \subset \SO(5)$ induces the map $\omega \rightarrow
(\omega_1 , 2 \, \omega_2)$ and we obtain
\bdm
p_1(M^5) \ = \ \omega_1^2 \, + \, 
\omega_2^2 \ = \ \omega^2 \, + \, (2 \, \omega)^2
\ = \ 5 \, \omega^ 2 \ = \ 5 \ p_1(E^3) \ .
\edm
If the Stiefel-Whitney classes of $E^3$ are given by the elementary symmetric
functions
\bdm
w(E^3) \ = \ (1 \, + \, x_1)(1 \, + \, x_2)(1 \, + \, x_3) \ ,
\edm
then the classes of $S_0^2(E^3)$ are computed by
\bdm
w(S_0(E^3)) \ = \ w(S(E^3)) \ = \ \prod_{1 \leq i \leq j \leq 3} (1 \, + \,
x_i \, + \, x_j) \ .
\edm
The bundle $E^3$ is oriented, $x_1 + x_2 + x_3 = w_1(E^3) = 0$. A direct
computation mod $2$ yields now the results
\bdm
w_1(S(E^3)) \ = \ w_4(S(E^3)) \ = \ w_5(S(E^3)) \ = \ 0 \ ,
\edm
and
\begin{eqnarray*}
w_2(S(E^3)) &=& (x_1 \, + \, x_2 \, + \, x_3)^2 \, + \, x_1x_2 \, + \,
x_1x_3 \, + \, x_2x_3 \ = \ w_2(E^3) \ , \\
w_3(S(E^3)) &=&  (x_1 \, + \, x_2 \, + \, x_3)( x_1x_2 \, + \,
x_1x_3 \, + \, x_2x_3) \, + \, x_1x_2x_3 \ = \ w_3(E^3) \ .
\end{eqnarray*}
\end{proof}
\begin{NB}
The construction and classification of oriented $3$-dimensional vector bundles
over compact, oriented $5$-manifolds in terms of topological data is difficult. But assume that we have such a bundle
$E^3$ over $M^5$ and $w_2(E^3) = w_2(M^5) \, , \, 5 \, p_1(E^3) = p_1(M^5)$ 
holds. Moreover, assume that $H^4(M^5; \Z) = H_1(M^5 ; \Z)$
has no $2$-torsion. Then $w_2(S^2_0(E^3)) = w_2(T(M^5)), \, p_1(S^2_0(E^3))
= p_1(T(M^5))$ and the real vector bundles $S^2_0(E^3)$ and $T(M^5)$ are
stable equivalent (see \cite{DW59} and \cite{Thomas68}).
\end{NB}
Wu's formulas linking the Stiefel-Whitney classes of an oriented $5$-manifold
$M^5$ read as
\begin{eqnarray*}
w_3(M^5) &=& Sq^1(w_2(M^5)) \, , \quad w_4(M^5) \ = \ w_2(M^5) \, \cup \, w_2(M^5)
\, ,  \\
w_2(M^5) \, \cup \, w_3(M^5) &=& Sq^1(w_2(M^5) \, \cup \, w_2(M^5)) \, + \
Sq^2(w_3(M^5)) \ . 
\end{eqnarray*}
In particular, we obtain
\begin{cor}
If $M^5$ admits a $\SO(3)_\ir$ or a $\SO(3)_\st$ structure, then
\bdm
w_2(M^5) \, \cup \, w_2(M^5) \ = \ 0 \, , \ w_3(M^5) \ = \ Sq^1(w_2(M^5)) \, ,
\quad
w_2(M^5) \, \cup \, w_3(M^5) \ = \ Sq^2(w_3(M^5))
\edm
holds.
\end{cor}

\begin{exa}
The real projective space $\R\P^5$ cannot have either kind of a
 $\SO(3)$ structure, for in both cases the vanishing of $w_4(\R\P^5)$
would be a necessary condition. Indeed, its Stiefel-Whitney class
$w_4(\R\P^5)\neq 0$ is non-trivial.
\end{exa}
The necessary conditions expressed via the Pontrjagin class as well as the
Stiefel-Whitney classes do not imply the existence of a $\SO(3)_\ir$
structure. In fact, there is a further obstruction in $H^5(M^5;\Z_2)$,
probably the vanishing of $\hat{\chi}_2(M^5)$. Here we prove only a weaker
statement.
\begin{thm}\label{thm:so3-parallel}
Let $M^5$ be a compact, simply-connected spin manifold admitting a
$\SO(3)_\ir$ or a $\SO(3)_\st$ structure. Then $M^5$ is parallelizable. In particular, the
sphere $S^5$ does not admit a $\SO(3)_\ir$ nor a $\SO(3)_\st$ structure.  
\end{thm}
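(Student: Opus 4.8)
The plan is to show that a compact, simply-connected spin $5$-manifold $M^5$ carrying either kind of $\SO(3)$ structure has trivial tangent bundle. The strategy is to combine the fact that such an $M^5$ already carries two linearly independent vector fields (from the $\SO(3)_\st$ structure, or from the $\SO(3)_\ir$ structure after invoking the reduction to $\SO(3)_\st$ guaranteed by the topological data, cf.\ Theorem~\ref{thm.char-classes} and the preceding remark) with the classification of stably trivial bundles over low-dimensional complexes. Concretely, the existence of an $\SO(3)$ structure reduces the structure group of $TM^5$ to a $3$-dimensional group, so $TM^5$ splits off a trivial rank-$2$ summand: there is a rank-$3$ bundle $F$ with $TM^5\cong F\oplus\underline{\R}^2$. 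It then suffices to prove that $F$ is itself trivial.

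First I would fix the invariants that a rank-$3$ oriented bundle $F$ over a compact oriented $5$-manifold can have, namely $w_2(F)\in H^2(M^5;\Z_2)$, $p_1(F)\in H^4(M^5;\Z)$, and an Euler-type class in $H^3(M^5;\Z)$ (the bundle has rank $3$, so there is a twisted Euler/primary obstruction living in degree $3$). Because $M^5$ is simply connected we have $w_1=0$ and $H^1(M^5;\Z)=0$; because $M^5$ is spin we have $w_2(M^5)=0$, and hence by item~(3) of Theorem~\ref{thm.char-classes} also $w_2(F)=w_2(M^5)=0$, so $F$ is spin as well. The remaining obstructions to trivializing $F$ over the $3$-, $4$-, and $5$-skeleta lie in $H^k(M^5;\pi_{k-1}(\SO(3)))$ for $k=3,4,5$; the homotopy groups $\pi_2(\SO(3))=0$, $\pi_3(\SO(3))=\Z$, $\pi_4(\SO(3))=\Z_2$ feed the surviving obstructions into $H^4$ and $H^5$.

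Next I would kill these obstructions using the characteristic-class relations together with Poincar\'e duality and the spin/simply-connected hypotheses. The $H^4$-obstruction is detected by $p_1(F)$ (up to a factor), and the divisibility statement $p_1(M^5)=5\,p_1(F)$ from Theorem~\ref{thm.char-classes}(1) pins it down; combined with the Wu-formula consequences in the Corollary above (which force $w_4(M^5)=w_2(M^5)\cup w_2(M^5)=0$ and constrain $w_3$), one gets that the integral class $p_1(F)$ is a torsion or even vanishing class on a rational homology sphere and, more to the point, that the primary obstruction over the $4$-skeleton can be removed. The last obstruction, in $H^5(M^5;\Z_2)=\Z_2$ coming from $\pi_4(\SO(3))=\Z_2$, is the delicate one and is handled by the spin hypothesis: for a spin manifold this top $\Z_2$-obstruction is governed by the Kervaire semi-characteristic $k(M^5)$, which by the Lusztig-Milnor-Peterson formula equals $\hat\chi_2(M^5)$ when $w_3=0$ (true here since $M^5$ is spin), and the vanishing of the relevant secondary class follows from the Atiyah-Thomas criterion (Theorem~\ref{thm}) once two independent vector fields are present.

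The main obstacle is precisely this final $\Z_2$-obstruction in top degree: the numerical invariants $p_1$ and the $w_i$ do not see it, so one cannot conclude triviality from characteristic classes alone. The spin assumption is what rescues the argument, via the identification of the top obstruction with a semi-characteristic that vanishes under the Atiyah-Thomas theorem; without it the bundle need only be stably trivial, not trivial. I expect the proof to run: reduce to triviality of the rank-$3$ factor $F$; vanish the $\pi_2$- and $\pi_3$-stage obstructions using $w_2(F)=0$ and the $p_1$-relation; then invoke spinness plus Theorem~\ref{thm} (two vector fields exist, so $k(M^5)=0$) to vanish the top $\Z_2$-obstruction. The corollary for $S^5$ is immediate, since $S^5$ is compact, simply connected, and spin but not parallelizable, so it can carry neither structure.
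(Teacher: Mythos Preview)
Your argument has a genuine gap in the $\SO(3)_\ir$ case. You write that ``the existence of an $\SO(3)$ structure reduces the structure group of $TM^5$ to a $3$-dimensional group, so $TM^5$ splits off a trivial rank-$2$ summand.'' This inference is false for the irreducible embedding: the $5$-dimensional representation of $\SO(3)_\ir$ is \emph{irreducible}, so it has no trivial subrepresentation and $TM^5\cong S^2_0(E^3)$ does \emph{not} split off $\underline{\R}^2$. Your attempt to repair this by ``invoking the reduction to $\SO(3)_\st$ guaranteed by the topological data'' appeals precisely to the claim of Bobie\'nski that the paper refutes with the example $\SU(3)/\SO(3)$ (which carries an $\SO(3)_\ir$ structure but no $\SO(3)_\st$ structure). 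Consequently, your final step---``two vector fields exist, so $k(M^5)=0$''---is circular in the irreducible case: the existence of two independent vector fields is part of what must be proved, not something you may assume.

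The paper's proof avoids this trap entirely. It works directly with the rank-$3$ bundle $E^3$ (which \emph{is} spin, since $w_2(E^3)=w_2(M^5)=0$), so its classifying map lands in $B\Spin(3)=\HP^{\infty}$ and hence, by dimension, in $S^4$. Simple connectivity gives $H^4(M^5;\Z)=H_1(M^5;\Z)=0$, so the map is null-homotopic over the $4$-skeleton; the only remaining obstruction lies in $H^5(M^5;\pi_4(\SO(3)))\cong\Z_2$. The crucial point---which your proposal misses---is not that this obstruction vanishes for $E^3$ (it need not), but that the induced map $i_*\colon\pi_4(\SO(3))\to\pi_4(\SO(5))$ is \emph{zero} for both embeddings (this is the content of Lemma~\ref{lem:ind-hom}, proved via the homotopy exact sequences of the Stiefel fibration $V_{5,2}$ and of the Berger space $\SO(5)/\SO(3)_\ir$). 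Hence, whatever $E^3$ does, the associated rank-$5$ bundle $E^3\oplus\theta^2$ (standard case) or $S^2_0(E^3)$ (irreducible case) is trivial, and $M^5$ is parallelizable. Note that this proves triviality of $TM^5$, not of $E^3$; your program of showing $F$ itself trivial is aiming at a stronger statement that may well be false.
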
 
\begin{NB}
The Theorem is well known for the standard embedding. Indeed, if $w_2(M^5) =0$
and the simply-connected $M^5$ admits a $\SO(3)_\st$ structure, then
$p_1(M^5) = 0$ as well as $k(M^5) = \hat{\chi}_2(M^5) = 0$. These conditions
imply that $M^5$ is parallelizable (see \cite{Thomas68}).
\end{NB}
In order to prove this theorem, we need the following
\begin{lem}\label{lem:ind-hom}
Let $i : \SO(3) \rightarrow \SO(5)$ be the standard or the irreducible embedding
of the group $\SO(3)$ into $\SO(5)$. Then the induced homomorphism
\bdm
i_* \ : \ \pi_4(\SO(3)) \ = \ \Z_2 \ \longrightarrow \ \pi_4(\SO(5))\ = \ \Z_2
\edm 
is trivial.
\end{lem}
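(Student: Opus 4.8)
The plan is to analyze the map $i_* : \pi_4(\SO(3)) \to \pi_4(\SO(5))$ by passing through the intermediate groups and using the known structure of the homotopy groups of the orthogonal groups. Since both $\pi_4(\SO(3)) \cong \Z_2$ and $\pi_4(\SO(5)) \cong \Z_2$, the map $i_*$ is either trivial or an isomorphism, so it suffices to decide which of the two cases occurs. For the \emph{standard} embedding, factoring $\SO(3)_\st \hookrightarrow \SO(4) \hookrightarrow \SO(5)$, I would use the fibration $\SO(4)/\SO(3) = S^3$ and the stabilization results: recall that $\pi_4(\SO(n))$ stabilizes for $n \geq 6$ with stable value $\Z_2$, and the inclusions $\SO(5) \hookrightarrow \SO(6) \hookrightarrow \cdots$ are isomorphisms on $\pi_4$. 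The key computational input is the long exact homotopy sequence of the fibration $\SO(n) \hookrightarrow \SO(n+1) \to S^n$, which for the relevant range relates $\pi_4(\SO(3))$, $\pi_4(\SO(4))$ and $\pi_4(\SO(5))$ and pins down the behaviour of the inclusion-induced maps.

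For the \emph{irreducible} embedding $\SO(3)_\ir \hookrightarrow \SO(5)$, the factorization through smaller orthogonal groups is no longer available, so a different argument is needed. Here the natural approach is to use the double cover $\Spin(3) = \SU(2) = S^3 \to \SO(3)$ and the corresponding $\Spin(5) = \Sympl(2) \to \SO(5)$; since $\pi_4$ is unaffected by passing to a double cover (the cover induces an isomorphism on $\pi_k$ for $k \geq 2$), it is equivalent to study the lifted map $\tilde{\imath}_* : \pi_4(\Spin(3)) = \pi_4(S^3) = \Z_2 \to \pi_4(\Spin(5)) = \pi_4(\Sympl(2)) = \Z_2$. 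The generator of $\pi_4(S^3) = \Z_2$ is the suspension of the Hopf map $\eta$, so I would trace $\Sigma\eta$ through the representation-theoretic map $S^3 = \SU(2) \to \Sympl(2)$ coming from the $5$-dimensional irreducible representation, and check whether its image is null-homotopic.

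The cleanest way to carry this out is to identify the generator of $\pi_4(\SU(2))$ with the nontrivial element detected by the composite with $\eta$, and then to invoke the standard fact that $\pi_4(\Sympl(n))$ is generated compatibly with $\pi_4(\SU(2))$ via the inclusions $\SU(2) \hookrightarrow \Sympl(n)$. The map $\SU(2) \to \Sympl(2)$ induced by the irreducible representation differs from the standard inclusion, but on $\pi_4 = \Z_2$ the relevant invariant is governed by the degree of the representation on the level of the symplectic Pontrjagin/characteristic data; since the irreducible $5$-dimensional representation corresponds to an odd symplectic weight, I expect $\Sigma\eta$ to be killed. I would confirm triviality by showing that $\tilde{\imath} \circ \Sigma\eta$ extends over the top cell, equivalently that the induced map on the relevant stable homotopy bidegree vanishes.

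The main obstacle will be the irreducible case: establishing that the generator $\Sigma\eta \in \pi_4(S^3)$ maps to zero requires genuinely understanding how the $5$-dimensional irreducible representation acts on this $\Z_2$, rather than merely counting isomorphism types of source and target. I anticipate the decisive step is a multiplicativity or parity argument — either computing the effect of the representation on $\pi_3$ first (where $\pi_3(\SU(2)) = \Z \to \pi_3(\Sympl(2)) = \Z$ is multiplication by an explicit integer determined by the representation) and then leveraging the $\eta$-module structure $\pi_3 \xrightarrow{\cdot\eta} \pi_4$, which forces $i_*$ on $\pi_4$ to be multiplication by that integer mod $2$. Determining the parity of this multiplier, and verifying it is even, is where the real work lies; the standard embedding, by contrast, should follow routinely from the fibration sequences and stabilization.
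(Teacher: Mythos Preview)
Your overall strategy is sound and in fact very close to the paper's: lift to $\Spin(3)=S^3\to\Spin(5)=\Sympl(2)$, identify $\pi_4$ on both sides with $\Z_2$, and settle the question by looking at the effect on $\pi_3$. The paper carries this out with two concrete inputs you are missing.

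For the standard embedding, rather than threading through $\SO(4)$ and stabilization, the paper uses the single fibration $\SO(3)_\st\to\SO(5)\to V_{5,2}$ together with the known value $\pi_4(V_{5,2})=\Z_2$; surjectivity of $\pi_4(\SO(5))\to\pi_4(V_{5,2})$ in the long exact sequence forces $(i_\st)_*=0$ immediately. Your route through $\SO(4)$ would also work but is longer.

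For the irreducible embedding, your ``$\eta$-module'' idea --- compute the multiplier on $\pi_3$ and reduce mod $2$ --- is exactly the right invariant, but you have not computed it, and the remark about an ``odd symplectic weight'' points in the wrong direction. The paper obtains the multiplier from the homotopy of the quotient: the Berger space $X^7=\SO(5)/\SO(3)_\ir$ has $\pi_3(X^7)=\Z_{10}$, so $(i_\ir)_*:\pi_3(S^3)=\Z\to\pi_3(\Sympl(2))=\Z$ is multiplication by $10$; the analogous computation with $V_{5,2}$ gives multiplication by $2$ for $i_\st$. The paper then finishes slightly differently from your $\eta$-argument: since $10=5\cdot 2$, one has $[i_\ir]=[i_\st\circ g]$ in $\pi_3(\Sympl(2))$ for a degree-$5$ self-map $g$ of $S^3$, hence $i_\ir\simeq i_\st\circ g$ as maps $S^3\to\Sympl(2)$, and so $(i_\ir)_*=(i_\st)_*\circ g_*$ on $\pi_4$ is trivial because $(i_\st)_*$ already is. This sidesteps any explicit appeal to the $\eta$-module structure. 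Either way, the decisive datum is that the $\pi_3$-multiplier is the \emph{even} integer $10$, supplied by the Berger space; that is the piece you should add.
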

\begin{proof}[Proof of Lemma $\ref{lem:ind-hom}$]
We remark that both homotopy groups are isomorphic to $\Z_2$,
\begin{gather*}
\pi_4(\SO(3)) \ = \ \pi_4(\Spin(3)) \ = \ \pi_4(S^3) \ = \ \Z_2, \\
\pi_4(\SO(5)) \ = \ \pi_4(\Spin(5)) \ = \ \pi_4(\Sympl(2)) \ = \ 
\pi_4(\Sympl(1)) \ = \ \pi_4(S^3) \ = \ \Z_2 \ .
\end{gather*}
The second line is a consequence of the classical isomorphism $\Spin(5) = 
\Sympl(2)$ and the fibration $\Sympl(2)/\Sympl(1) = S^7$. First consider the
standard embedding $i_\st : \SO(3) \rightarrow \SO(5)$. Then
$\SO(5)/i_\st(\SO(3)) = V_{5,2}$ is the Stiefel manifold and we obtain the exact
sequence
\bdm
\ldots \lra \, \pi_4(\SO(3)) \, = \, \Z_2 \, \lra \, 
\pi_4(\SO(5) \, = \, \Z_2 \, \lra \, \pi_4(V_{5,2}) \, \lra
\, \pi_3(\SO(3)) \, = \, \Z \, \lra \, \ldots . 
\edm
Since $\pi_4(V_{5,2}) = \Z_2$ (see \cite{Pae56}) we conclude that $\pi_4(\SO(5))
\rightarrow \pi_4(V_{5,2})$ is surjective, i.e. $(i_\st)_* : \pi_4(\SO(3))
\rightarrow \pi_4(\SO(5))$ is trivial. If  $i_\ir : 
\SO(3) \rightarrow \SO(5)$ is
the irreducible embedding then we denote by $X^7 = \SO(5)/i_\ir(\SO(3)) =
\Sympl(2)/i_\ir(\Sympl(1))$ the corresponding homogeneous space (the so called
Berger space). Its homotopy groups are known,
\bdm
\pi_1(X^7) \ = \ \pi_2(X^7) \ = \ 0 \ , \quad \pi_3(X^7) \ = \ \Z_{10} \ .
\edm
The exact sequence of the homotopy groups of that fibration yields that $(i_\ir)_* : \pi_3(\Sympl(1)) = \Z \rightarrow 
\pi_3(\Sympl(2)) = \Z$ is multiplication by $10$. Consequently, the
map $i_\ir : S^3 = \Sympl(1) \rightarrow \Sympl(2)$ represents ten times the
generator $[h] \in \pi_3(\Sympl(2))$, $[i_\ir] = 10 \cdot [h]$. A similar
argument proves that $[i_\st] = 2 \cdot [h]$ holds. Fix a map $g : S^3 =
\Sympl(1) \rightarrow \Sympl(1) = S^3$ of degree $5$. Then we obtain
\bdm
[i_\st \circ g] \ = \ 10 \cdot [h] \ = \ [i_\ir] \ ,
\edm
i.\,e.~the maps $i_\st \circ g , \, i_\ir : \Sympl(1) \rightarrow \Sympl(2)$
are homotopic. The induced map $(i_\ir)_* = (i_\st)_* \circ
g_*$ of the embedding $i_\ir$ is given by the induced maps of 
$i_\st$ and of $g$. Finally we see that $(i_\ir)_* : \pi_4(\SO(3)) 
\rightarrow \pi_4(SO(5))$ is again trivial.
\end{proof}
\begin{proof}[Proof of Theorem $\ref{thm:so3-parallel}$]
Suppose that $M^5$ admits a $\SO(3)_\ir$ structure. Then $E^3$ is a 
$3$-dimensional, oriented bundle with a spin structure. Its frame
bundle $P_{E^3}$ is given by a classifying map $M^5 \rightarrow B\Spin(3) =
\HP^{\infty}$. The manifold $M^5$ is $5$-dimensional and, consequently, the
classifying map is a map into $S^4$. Since
$H^4(M^5;\Z) = H_1(M^5;\Z) = 0$ there are at most two homotopy classes
of maps from $M^5$ into $S^4$ (see \cite[Ch.\,8, S.\,5, Thm.\,15]{Span66}).
The frame bundle  $P_{E^3}$ is trivial
over the $4$-skeleton of $M^5$ and the two bundles are given by their 
obstruction classes in $H^5(M^5; \pi_4(\SO(3))) = \pi_4(\SO(3)) = \Z_2$. 
However, the map $i_* : \pi_4(\SO(3)) \rightarrow \pi_4(\SO(5))$ is trivial. 
This implies that the bundles $E^3 \oplus \theta^2$ (in case of the standard 
embedding) or $S^2_0(E^3)$ (in case of the irreducible embedding) are 
trivial. Finally, $M^5$ is parallelizable.
\end{proof}
\begin{exa}
The connected sums $(2 l + 1) \# (S^2 \times S^3)$ are simply-connected, spin
and they admit a $\SO(3)_\st$ structure (the Kervaire semi-characteristic $k$
vanishes). Therefore they are parallelizable.
\end{exa}
The subgroup
$\SO(2)_\ir  :=  \big\{ ( A, A^2, 1 ) : A \in \SO(2) \big\}$ 
is contained in $\SO(3)_\ir \subset \SO(5)$. A $5$-manifold $M^5$ admits
a $\SO(2)_\ir$ structure (and, in particular, a $\SO(3)_\ir$ structure) if and
only if there exists a complex line bundle $E$ such that $T(M^5) = E \oplus
E^2 \oplus \theta^1$. Suppose that $M^5$ is a $S^1$-fibration over a
$4$-manifold $X^4$. Then the tangent bundle of $X^4$ should split into
$T(X^4) = E \oplus E^2$. Let us discuss the latter condition. In this way we
are able to construct whole families of $5$-manifolds admitting a topological
$\SO(3)_\ir$ structure.
\begin{prop}
Let $X^4$ be a smooth, compact,  oriented $4$-dimensional 
manifold. Then the following conditions are equivalent:
\begin{enumerate}
\item The tangent bundle splits into $T(X^4) = E \oplus E^2$.
\item There exists an element $c \in H^2(X^4 ; \Z)$ such that
\bdm
p_1(X^4) \ = \ 5 \, c^2 \, , \quad 
\chi(X^4) \ = \ 2 \, c^2 \, , \quad
\mbox{and} \quad c \ \equiv \ w_2(X^4) \quad
\mbox{mod} \ 2 \ .
\edm
\item There exists an element $c \in H^2(X^4 ; \Z)$ such that
\bdm
\chi(X^4) \ = \ 2 \, c^2 \, , 
 \quad c \ \equiv \ w_2(X^4) \quad
\mbox{mod} \ 2 \, \quad \mbox{and} \quad 6 \, 
\sigma(X^4) \ = \ 5 \, \chi(X^4). 
\edm
\end{enumerate}
\end{prop}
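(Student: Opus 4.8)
The plan is to turn the bundle-theoretic statement (1) into characteristic-class data, recognise that data as precisely (2), and then pass between (2) and (3) by the signature theorem. The computation that drives everything is that of the invariants of $V := E \oplus E^2$, where $E$ is a complex line bundle with $c_1(E) = c$. Regarding $E$ and $E^2$ as oriented real rank-$2$ bundles, one has $e(E) = c$, $e(E^2) = 2c$, $p_1(E) = c^2$, $p_1(E^2) = 4c^2$, and $w_2(E) = c \bmod 2$, $w_2(E^2) = 2c \equiv 0$. The Whitney sum formula, together with $c^4 = 0$ in $H^8(X^4) = 0$, then yields
\[
 e(V) \ = \ 2c^2, \qquad p_1(V) \ = \ 5c^2, \qquad w_2(V) \ = \ c \bmod 2.
\]

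For the implication (1) $\Rightarrow$ (2), I would simply read the three relations off an isomorphism $T(X^4) \cong V$: comparison with $\langle e(T(X^4)),[X^4]\rangle = \chi(X^4)$ gives $\chi(X^4) = 2c^2$, while $p_1$ and $w_2$ transport directly, giving $p_1(X^4) = 5c^2$ and $c \equiv w_2(X^4) \bmod 2$ (identifying $H^4(X^4;\Z) \cong \Z$ via the fundamental class; we may assume $X^4$ connected). For the converse (2) $\Rightarrow$ (1), I would start from a class $c$ satisfying (2), form $V = E \oplus E^2$ with $c_1(E)=c$, and invoke the classification of oriented rank-$4$ vector bundles over a $4$-complex: by Dold–Whitney \cite{DW59}, such a bundle is determined up to isomorphism by the triple $(w_2, p_1, e)$. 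Since (2) forces $w_2(V) = w_2(T(X^4))$, $p_1(V) = p_1(T(X^4))$ and $e(V) = e(T(X^4))$, we obtain $V \cong T(X^4)$, which is (1). Here the torsion-freeness of $H^4(X^4;\Z) \cong \Z$ is what suppresses the $2$-torsion phenomena that would otherwise obstruct such a reconstruction (compare the hypothesis appearing in the preceding Remark).

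The equivalence (2) $\Leftrightarrow$ (3) is then pure bookkeeping. By the Hirzebruch signature theorem, $p_1(X^4) = 3\,\sigma(X^4)$ in $H^4(X^4;\Z) \cong \Z$. Eliminating the integer $c^2$ from the relations $\chi(X^4) = 2c^2$ and $p_1(X^4) = 3\,\sigma(X^4) = 5c^2$ produces $6\,\sigma(X^4) = 5\,\chi(X^4)$, while the congruence $c \equiv w_2(X^4) \bmod 2$ is common to both conditions; conversely, from $\chi(X^4) = 2c^2$ and $6\,\sigma(X^4) = 5\,\chi(X^4)$ one recovers $p_1(X^4) = 3\,\sigma(X^4) = 5c^2$. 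Thus (2) and (3) merely record the same constraint before and after substituting the signature theorem.

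The main obstacle is the direction (2) $\Rightarrow$ (1): the remaining steps are a Chern–Pontrjagin computation and an elimination of $c^2$, but upgrading agreement of characteristic classes to an honest bundle isomorphism is the genuine input. It relies on the Dold–Whitney classification in the \emph{unstable} rank-$4$ range, where, in contrast to the stable range governed by $w_2$ and $p_1$ alone, the Euler class must be retained as an independent invariant and one must verify that no further obstruction persists. The fact that $H^4(X^4;\Z)$ is torsion-free for a connected closed oriented $4$-manifold is exactly what ensures that $(w_2, p_1, e)$ is a complete set of invariants in this setting.
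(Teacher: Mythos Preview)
Your argument is correct and follows essentially the same route as the paper: compute the characteristic classes of $E\oplus E^2$, and for the converse invoke the Dold--Whitney classification of oriented rank-$4$ bundles over a $4$-complex, using that $H^4(X^4;\Z)$ has no $2$-torsion. You in fact supply more detail than the paper does, both in the explicit Chern--Pontrjagin computation and in the equivalence $(2)\Leftrightarrow(3)$ via the signature theorem, which the paper's proof leaves to the reader.
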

\begin{proof}
If $T(X^4) = E \oplus E^2$ then the first Chern class $c= c_1(E)$ of the line
bundle $E$ satisfies all the conditions. Conversely, suppose that there exists
an element $c \in H^2(X^4 ; \Z)$ with the described properties. Then
we consider the line bundle $E$ defined by the condition $c = c_1(E)$. The
Euler, the Pontrjagin and the Stiefel-Whitney classes of the
real bundles $T(X^4)$ and $E \oplus E^2$ coincide and $H^4(X^4; \Z)$ has no
$2$-torsion. Consequently, the $4$-dimensional real vector bundles $T(X^4)$
and $E \oplus E^2$ are isomorphic (see \cite{DW59}, \cite{Thomas68}).
\end{proof}
Compact spin manifolds $X^4$ with finite fundamental 
group and
admitting a decomposition $T(X^4) = E \oplus E^2$ do not exist. Indeed,
denote by $U$ the
intersection form of $S^2 \times S^2$ and by $\Gamma_8$ the non-trivial,
positive definite quadratic form of rank $8$. The intersection form of the universal covering $\tilde{X}^4$ is isomorphic to
$p \cdot U \oplus q \cdot \Gamma_8$ (see \cite{Serre70}, chapter 5, 
Theorem 5). But
\bdm
\sigma(\tilde{X}^4) \ = \ 8 \, q  , \quad \chi(\tilde{X}^4) \ = \ 2 \, p \, + \, 8 \, q \, +
\, 2 
\edm 
and $6 \, \sigma(\tilde{X}^4) = 5 \,
\chi(\tilde{X}^4)$ yields  $8 q = 10 p + 10 > 8 p$. Finally we obtain
$p < q$ and the manifold cannot be smooth 
(the $11/8$ conjecture, see \cite{Furuta01}).\\

Any indefinite and odd quadratic form over $\Z$ can be
realized as the intersection form of a smooth, compact and simply-connected
$4$-manifold $X^4$. Any such form is isomorphic to the sum of two trivial
forms (see \cite{Serre70},
chapter 5, Theorem 4), $H^2(X^4;\Z) = s \cdot \langle 1\rangle \oplus \, t 
\cdot \langle -1 \rangle$. Then
we obtain
\bdm
\chi(X^4) \ = \ 2 \, + \, s  +  t , \quad \sigma(X^4) \ = \ s  -  t 
\edm
and the condition $6 \, \sigma(X^4) = 5 \, \chi(X^4)$ implies $s - 11 \, t =
10$. Consider the generators $e_1 , \ldots , e_s$ and $f_1 , \ldots , f_t$ of
the quadratic form with $e_{\alpha}^2 = 1$ as well as $f_{\beta}^2 =
-1$, $\alpha = 1 , \ldots , s$ and $\beta = 1 , \ldots , t$. An 
admissible class $c \equiv w_2(X^4)$ is a linear combination with odd
coefficients,
\bdm
c \ = \ a_1 \, e_1  +  \ldots   + \, a_s \, e_s \, + \, b_1 \, f_1 \, + \,
\ldots \,  + \, b_t \, f_t 
\edm  
and the equation $ 2\, c^2 = \chi(X^4)$ becomes
$a_1^2 +  \ldots   +  a_s^2  -  b_1^2   -  \ldots   -  
b_t^2 \, = \, 6 \, + \, 6 \, t$.
The system
\bdm
s \, - \, 11 \, t \ = \ 10 \, , \quad a_1^2 \, + \, \ldots \,  + \, a_s^2 \, - \, b_1^2  \, - \, \ldots \,  -  \,
b_t^2 \ = \ 6 \, + \, 6 \, t \ . 
\edm
has solutions in odd number $a_{\alpha}, b_{\beta}$. A first solution is
$ s = 21, \, t = 1,  \, a_1 = \ldots = a_{21} = 1, \, b_1 = 3$ and 
the corresponding manifold is homeomorphic to $21 \, 
\CP^2 \# \bar{\CP}^2$. A second solution is
$ s = 43, \, t = 3,  \, a_1 = \ldots = a_{43} = 3, \, b_1 =b_2 = b_3 = 11$
with the manifold $43 \, \CP^2 \, \# \, 3 \, \bar{\CP}^2$.
 A third solution is
$ s = 197, \, t = 17,  \, a_1 = \ldots = a_{197} = 15, \, b_1 = \ldots 
= b_{17} = 51$
with the manifold $197 \, \CP^2 \, \# \, 17 \, \bar{\CP}^2$.
Any $S^1$-bundle $M^5$ over these spaces admits a $\SO(2)_\ir \subset \SO(3)_\ir$
structure. 
\begin{NB}
The Thom-Gysin sequence yields the relations
\bdm
\hat{\chi}_2(M^5) \ \equiv \ \dim_{\Z_2}H_2(X^4; \Z_2) , \quad
k(M^5) \ \equiv \  \dim_{\R}H_2(X^4; \R)  , \quad
\hat{\chi}_2(M^5) \ = \ k(M^5)
\edm
for any oriented $S^1$-bundle $\pi : M^5 \rightarrow X^4$ over a compact, 
simply-connected $4$-manifold $X^4$. Note that 
$w_3(M^5) = \pi^*\big(w_3(X^4)) = 0$ holds anyway. $M^5$ is a spin manifold
if and only if the Chern class $c^* \in H^2(X^4;\Z)$ of the fibration  
$\pi : M^5 \ra X^4$ represents the Stiefel-Whitney class of $X^4$, 
$c^* \equiv w_2(X^4)$ mod $2$. The Pontrjagin class $p_1(M^5)$ vanishes if and only if there exists
an element $x \in H^2(X^4; \Z)$ such that $p_1(X^4) = c^* \cup x$.
\end{NB}
%
\section{Homogeneous examples and their geometric properties}
%
%
Homogeneous manifolds with a $\SO(3)_\ir$ structure have been
classified using Cartan's method of integration by Bobienski and Nurowski,
see \cite{Bobienski06}. In this section, it is our goal to describe their 
geometric properties. 

Given that a non-discrete subgroup  $H\subset \SO(3)_\ir$ can only have 
dimension $1$ or $3$,
a homogeneous space $M^5=G/H$ can only be the quotient of a group $G$
of dimension $8$ or $6$. The case $\dim G=8$ is not so interesting,
as these are precisely the symmetric spaces $\SU(3)/SO(3)$, $\SL(3,\R)/\SO(3)$
and $\R^5$. We shall  therefore concentrate our attention on homogeneous
spaces $M^5=G^6/\SO(2)$, whith $\SO(2)\subset \SO(3)_\ir$. The case of
$5$-dimensional Lie groups with $\SO(3)_\ir$ structure has been discussed in
detail by Chiossi and Fino, see \cite{Chiossi&F}.
%
%
%
\subsection{The `twisted' Stiefel manifold $V^\ir_{2,4}=\SO(3)\x \SO(3)/ \SO(2)_\ir$}
%
We consider the inclusion
\bdm
H \ :=\ \SO(2)\ni A\lmapsto (A,A^2)\in \SO(3)\x\SO(3)\ =:\ G
\edm
and the corresponding homogeneous space $V_{2,4}^\ir :=\SO(3)\x \SO(3)/ \SO(2)$.
If we choose as standard basis of the Lie algebra $\so(3)$ the elements
$s_1, s_2$ and $s_3$ defined in Section 2 and as basis of 
$\g=\so(3)\oplus\so(3)$ the elements $a_i=(s_i,0), b_i=(0,s_i), i=1,2,3$,
the Lie algebra $\h$ of $\SO(2)$ is  given by
\bdm
\h\ =\ \R\cdot \tilde{e}_0 \text{ with }  \tilde{e}_0\, =\,  (s_3,2 s_3)\, 
=\, (a_3+2b_3).
\edm
We further define 
\bdm
\tilde{e}_1\,=\, b_3-2a_3, \quad \tilde{e}_2\,=\, a_1,\quad
\tilde{e}_3\,=\, a_2,\quad \tilde{e}_4\,=\, b_1,\quad \tilde{e}_5\,=\, b_2.
\edm
As a reductive complement $\m$ of $\h$, we may then choose 
\bdm
\m\,=\, \n\oplus \m_1\oplus\m_2,\quad
\n\,=\, \R \cdot \tilde{e}_1,\quad
\m_1\,=\, \langle  \tilde{e}_2,  \tilde{e}_3\rangle,\quad
\m_2\,=\, \langle  \tilde{e}_4,  \tilde{e}_5\rangle.
\edm
One checks that the isotropy representation $\lambda=\Ad\big|_H:\, \SO(2)\ra\SO(5)$
is given by $\lambda (A)\ =\ \diag (1,A,A^2)$
and has differential $d\lambda(\tilde{e}_0)=E_{23}+E_{45}$.
In particular, one sees that $\lambda(H)$ is indeed a subgroup of 
$\SO(3)_{\ir}\subset\SO(5)$, but not of $\SO(3)_{\st}\subset\SO(5)$.
Thus, the quotient $G/H$ has a $\SO(3)_\ir$ structure as claimed.
In order to make this property more transparent, we shall write
$\SO(2)_\ir$ for the chosen embedding of $\SO(2)$ inside $\SO(3)\x\SO(3)$
as well as for its image $\lambda(\SO(2)_\ir)\subset \SO(5)$.

In order to define a suitable family of metrics on $\m$ and thereby
of Riemannian metrics on $G/H$, we first note that not only $\m$ itself,
but each of the spaces $\n,\m_1,\m_2$ is $\lambda(H)$ invariant. Thus,
it makes sense to consider a metric that is a renormalization of
the Killing form on each of the factors $\n,\m_1$ and $\m_2$. Since
our initial basis consists of orthogonal vectors for the Killing form,
we define a $3$-parameter family of metrics by
\bdm
g_{\alpha\beta\gamma}\ =\ \diag (\alpha, \beta,\beta,\gamma,\gamma),\quad
\alpha,\beta,\gamma\, >\, 0.
\edm 
We set $e_0=\tilde{e}_0$ and renormalize our previous basis so that it
becomes an orthonormal basis for this new metric,
\bdm
e_1\, =\, \frac{\tilde{e}_1}{\sqrt{\alpha}},\quad
e_2\, =\, \frac{\tilde{e}_2}{\sqrt{\beta}},\quad
e_3\, =\, \frac{\tilde{e}_3}{\sqrt{\beta}},\quad
e_4\, =\, \frac{\tilde{e}_4}{\sqrt{\gamma}},\quad
e_5\, =\, \frac{\tilde{e}_5}{\sqrt{\gamma}}.
\edm
For later reference, we state all non-vanishing commutator relations:
\bea[*]
[e_0,e_2]\nms &=& \nms e_3,\quad [e_0,e_3]\,=\, -e_2,\quad [e_0,e_4]\,=\, 2e_5,\quad
[e_0,e_5]\,=\,-2 e_4 , \\
{[e_1,e_2]}\nms &=&\nms -\frac{2}{\sqrt{\alpha}} e_3,\quad
[e_1,e_3]\,=\, \frac{2}{\sqrt{\alpha}}e_2,\quad
[e_1,e_4]\,=\,\frac{1}{\sqrt{\alpha}} e_5,\quad
[e_1,e_5]\,=\, - \frac{1}{\sqrt{\alpha}}e_4,\\
{[e_2,e_3]}\nms & = &\nms \frac{1}{5\beta}(e_0-2\sqrt{\alpha} \, e_1),\quad
[e_4,e_5]\,=\, \frac{1}{5\gamma}(2e_0+\sqrt{\alpha}\, e_1).
\eea[*]
\begin{NB}
Before investigating its properties in more detail, let us compare
the homogeneous space $V^\ir_{2,4}=\SO(3)\x\SO(3)/SO(2)_{\ir}$ with the classical
Stiefel manifold $V^\st_{2,4} = \SO(4)/\SO(2)$. Because of $\SO(4)=S^3\x\SO(3)$,
the corresponding Lie algebra embedding $\so(2)\ra \so(3)\x\so(3)$
is just $h\mapsto (h,0)$ in the classical case, $h\mapsto (h,2h)$ in the
case that we are considering. Hence, we see that that  $V^\st_{2,4}$ carries
an $\SO(3)_\st$ structure, hence justifying the superscript. 
In \cite{Jensen75}, it was shown that the classical
Stiefel manifold $V^\st_{2,4}$ carries an Einstein metric. Later, this 
Einstein metric
was recognized to be Sasaki and the existence of two Riemannian Killing
spinors was established \cite{Friedrich80}. Connections with
antisymmetric torsion on $V^\st_{2,4}$ were investigated in \cite{Agricola03}.
All in all, this example turned out to be crucial for the understanding of
the relations between contact structures and the existence of Killing spinors.
\end{NB}
\begin{NB}
Let us display the $\SO(3)_\ir$ structure of $V^\ir_{2,4}$ in yet another
way, namely, as a bundle $E^3$ satisfying $S^2_0(E^3)\cong T V^\ir_{2,4}$
as used in Section 3. The frame bundle of the homogeneous space
$V^\ir_{2,4}$ is $\mathcal{R}= G\times_{\lambda(H)}\SO(5)$ and its
tangent bundle is $TV^\ir_{2,4} = G\times_{\lambda(H)} \m$; therefore, the
following vector bundle is well defined,
\bdm
E^3\  =\ G\x _{\lambda(H)} \so(3)_{\ir},
\edm
where the action of $H$ is by conjugation on the subspace 
$\so(3)_{\ir}\subset\so(5)$ as always. One then checks that, as $H$
representations, $S^2_0(\so(3)_{\ir})\cong \m$, hence showing
$S^2_0(E^3)\cong T V^\ir_{2,4}$ as claimed.
\end{NB}
\begin{thm}[Connection properties]\label{thm-connection}
The twisted Stiefel manifold $V^\ir_{2,4}=\SO(3)\x\SO(3)/SO(2)_{\ir}$ 
equipped with the
family of metrics $g_{\alpha\beta\gamma}$ has the following properties:
\begin{enumerate}
\item For parameters $\alpha, \beta,\gamma >0$ satisfying
$\alpha\beta+4\,\gamma\alpha-25\,\beta\gamma=0$, the $\SO(3)_\ir$ structure
is nearly integrable and the  torsion $T^{\alpha\beta\gamma}$ of its
characteristic connection $\nabla^{\alpha\beta\gamma}$  is, in a suitable
orthonormal basis, given by
\bdm
T^{\alpha\beta\gamma} \ =\ \frac{2\sqrt{\alpha}}{5\beta}\,e_1\wedge e_2\wedge
e_3 - \frac{\sqrt{\alpha}}{5\gamma}\, e_1\wedge e_4\wedge e_5.
\edm
Its holonomy is $\SO(2)_\ir\subset \SO(5)$ and its torsion is parallel,
$\nabla^{\alpha\beta\gamma} T^{\alpha\beta\gamma} =0$.  
\item The metric of the nearly integrable $\SO(3)_\ir$ structure is naturally 
reductive if and only if $\alpha=5\beta=5\gamma$.
\end{enumerate}
\end{thm}
\begin{proof}
By a Theorem of Wang \cite[X.2]{Kobayashi&N2}, invariant metric
connections $\nabla^{\alpha\beta\gamma}$ on $V^\ir_{2,4}=G/H$ are in bijective 
correspondence with linear maps
$\Lambda_\m: \, \m\ra\so(5)$ that are equivariant under the adjoint
representation, 
\bdm\tag{$*$}
\Lambda_\m (hXh^{-1})\ =\ \Ad(h)\Lambda_\m (X)\Ad(h)^{-1} \quad
\forall h\in H,\ X\in \m.
\edm
Since  $\Lambda_\m$ is basically the connection form,
$\nabla^{\alpha\beta\gamma}$ will be a $\SO_\ir$ connection if and only if
 $\Lambda_\m$ takes values in the structure group $\SO(3)_\ir$ of the reduction
of the frame bundle. We have $\lambda(e_0)=e_{23}+2\,E_{45}= X_3$ in the 
notation of Section $2$ and complete it to a basis  of 
$\so(3)_\ir\subset \so(5)$ by choosing as additional elements $X_1$ and
$X_2$.  Thus, $\Lambda_\m(e_i)$ is a priori for each $i=1,\ldots,4$
a linear combination of the elements $X_i,\ i=1,2,3$. However,
the equivariance condition $(*)$ further restrics the possible values
of $\Lambda_\m(e_i)$; one checks that the most general Ansatz
for a $\SO_\ir$ connection is ($a,b,c\in\R$)
\bdm
\Lambda_\m(e_1)\,=\, a X_3,\quad
\Lambda_\m(e_2)\,=\, b X_1 -c X_2,\quad
\Lambda_m(e_3)\,=\, c X_1+b X_2,\quad
\Lambda_m(e_4)\,=\,\Lambda_m(e_5)\, =\, 0.
\edm
For the possible torsion $T^{\alpha\beta\gamma}\in \Lambda^3(G/H)$, observe 
that the only $\lambda$-invariant $3$-forms are $e_1\wedge e_2\wedge e_3$
and   $e_1\wedge e_4\wedge e_4$, thus the torsion has to be of the form
($m,n\in\R$)
\bdm
T^{\alpha\beta\gamma}\ = \ m\, e_1\wedge e_2\wedge e_3 +
n\, e_1\wedge e_4\wedge e_5.
\edm
Since the torsion of the connection defined by $\Lambda_\m$ is given by
\cite[X.2.3]{Kobayashi&N2}
\bdm\tag{$*$}
T(X,Y)_o\ = \ \Lambda_\m(X)Y-\Lambda_\m(Y)X- [X,Y]_\m, \quad X,\,Y\in\m,
\edm
one concludes by a routine evaluation on all pairs of vectors $e_i\neq e_j$
that
\bdm
b\,=\,c\,=\, 0,\quad 
m\,=\, a+\frac{2}{\sqrt{\alpha}}\,=\, \frac{2\sqrt{\alpha}}{5\beta},
\quad
n\,=\, 2a - \frac{1}{\sqrt{\alpha}}\,=\, - \frac{\sqrt{\alpha}}{5\gamma}.
\edm
A $\SO(3)_\ir$ connection is thus obtained  if and only if
$\alpha\beta+4\alpha\gamma-25\beta\gamma=0$ and is then defined by
\bdm
\Lambda_\m(e_1)\,=\,  \left( \frac{2\sqrt{\alpha}}{5\beta} - 
\frac{2}{\sqrt{\alpha}} \right) X_3, \quad
T^{\alpha\beta\gamma}\,=\, \frac{2\sqrt{\alpha}}{5\beta}\,e_1\wedge e_2\wedge
e_3 - \frac{\sqrt{\alpha}}{5\gamma}\, e_1\wedge e_4\wedge e_5.
\edm
If one requires further that $T^{\alpha\beta\gamma}(X,Y,Z)=-g([X,Y]_\m,Z)$,
one obtains a naturally reductive space and a comparison with the
commutator relations yields the stronger condition
$\alpha=5\beta=5\gamma$. Indeed, under this condition $\Lambda_\m=0$,
and the characteristic connection coincides with the canonical connection.

We now show that the torsion $T^{\alpha\beta\gamma}$ is 
$\nabla^{\alpha\beta\gamma}$-parallel. Invariant tensor are parallel with
respect to the canonical connection defined by $\Lambda_\m^c=0$, hence
 $\nabla^{\alpha\beta\gamma}_{e_i}T=\Lambda_\m(e_i)T$. We first note that 
on the invariant vector $e_1$, trivially $\Lambda_\m(e_1)e_1=0$ holds,
and after the identification $\so(\m)\cong \Lambda^2(\m)$, the action of
the connection on $2$-forms $\omega$ is given 
\bdm\tag{$**$}
\nabla^{\alpha\beta\gamma}_{e_i}\omega\ =\ 
\Lambda_\m(e_i)\omega\ =\ \sum_{j=1}^5 (e_j\haken\Lambda_\m (e_i))\wedge
(e_j\haken\omega).
\edm
Thus, one checks that $\nabla^{\alpha\beta\gamma} (e_2\wedge e_3) = 
 \nabla^{\alpha\beta\gamma} (e_4\wedge e_5) = 0$ and, consequently, 
  $\nabla^{\alpha\beta\gamma}T^{\alpha\beta\gamma} =0$ holds.

Finally, it is worth computing the curvature of the characteristic connection. 
In gereral, it is given by \cite[X.2.3]{Kobayashi&N2}
\bdm
R(X,Y)_o\ =\  [\Lambda_\m (X), \Lambda_\m (Y)] - \Lambda_\m ([X,Y]_\m)
-\lambda([X,Y]_\h) .
\edm
In the case at hand, one obtains as the only non-vanishing terms
\bdm
R(e_2, e_3)\ =\ \frac{1}{5\beta}\left[\frac{4\alpha}{5\beta} - 5\right] X_3,
\quad
R(e_4,e_5)\ =\ -\, \frac{2\alpha}{25\, \beta\gamma} X_3.
\edm
Let $\m_0\subset \m$ be the space spanned by the non-vanishing curvature
transformations. The holonomy algebra of the connection is then
\bdm
\mathg{hol}(\nabla^{\alpha\beta\gamma})\ =\ 
\m_0+[\Lambda_\m(\m),\m_0]+ [\Lambda_\m(\m),[\Lambda_\m(\m),\m_0]]+\ldots
\ =\ \R\cdot X_3\ =\ \so(2)_\ir\subset\so(5). \qedhere
\edm
\end{proof}
\begin{NB}
In the positive quadrant $\{(\alpha,\beta,\gamma)\in\R^3\ :\ \alpha>0,\
\beta>0,\ \gamma>0\}$, the metrics $g_{\alpha\beta\gamma}$ defining a nearly
integrable $\SO(3)_\ir$ structure form a ruled surfaces of rays through 
the origin. The naturally reductive metrics are exactly one ray (line) on this
ruled surface.
\end{NB}
\begin{thm}[Riemannian Curvature properties]\label{riem-curv-prop}
%
\begin{enumerate}
\item[]
\item The Riemannian Ricci tensor of the metric $g_{\alpha\beta\gamma}$ is given by
$\displaystyle 
\Ric(e_1,e_1)\,=\, \frac{2\alpha}{25\beta^2}+\frac{\alpha}{50\gamma^2}$,
\bdm
\Ric(e_2,e_2)\,=\, \Ric(e_3,e_3)\,=\, \frac{1}{\beta} - 
\frac{2\alpha}{25\beta^2},\quad
\Ric(e_4,e_4)\,=\, \Ric(e_5,e_5)\,=\, \frac{1}{\gamma} - 
\frac{\alpha}{50\gamma^2}.
\edm
\item The metric is Einstein (but not naturally reductive) for $\alpha=1$
and 
\bdm
\beta\, =\, \frac{18}{25(10^{2/3} -2\cdot
  10^{1/3}+4)}\ \cong\ 0.166177,\ 
\gamma\, =\, \frac{3(9+\sqrt{21+12\cdot 10^{1/3}+3\cdot 10^{2/3} })}{25(20-
4\cdot 10^{1/3} - 10^{2/3})}\, \cong\, 0.299009.
\edm
The Riemannian scalar curvature is $\Scal^g=\left[\frac{5}{3}\right]^3
(100-2\cdot 10^{1/3} - 10^{2/3}/2)\,\cong\, 15.60340835$.
\item The Riemannian holonomy of $(V^\ir_{2,4}, g_{\alpha\beta\gamma})$
is  $\SO(5)$, i.\,e.~maximal.
\end{enumerate}
\end{thm}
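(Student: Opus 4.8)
The plan is to establish the three claims of Theorem~\ref{riem-curv-prop} by direct computation from the structure constants of $\g=\so(3)\oplus\so(3)$ in the orthonormal basis $e_0,\ldots,e_5$, using the standard homogeneous-space formalism for the Levi-Civita connection. For part (1), I would compute the Levi-Civita connection $\nabla^g$ of $g_{\alpha\beta\gamma}$ via the Koszul-type formula for reductive homogeneous spaces,
\bdm
2\,g(\nabla^g_X Y,Z)\ =\ g([X,Y]_\m,Z)-g([Y,Z]_\m,X)+g([Z,X]_\m,Y)
\ +\ g([X,Y]_\h\cdot Z\,,\,\ldots),
\edm
i.\,e.~from the $\m$-components of the brackets listed before the statement together with the isotropy action $d\lambda(e_0)=E_{23}+E_{45}$ encoding the $\h$-contribution. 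Feeding these into $\Ric(X,X)=-\sum_i g(R(e_i,X)X,e_i)$ and collecting terms yields the stated diagonal Ricci values; the off-diagonal entries vanish by the $\lambda(H)$-invariance of the metric, since the isotropy representation decomposes $\m=\n\oplus\m_1\oplus\m_2$ into pairwise inequivalent summands (with $\n$ trivial, $\m_1$ of weight~$1$, $\m_2$ of weight~$2$), forcing $\Ric$ to be diagonal with equal eigenvalues on each $\m_i$. This is the routine but bookkeeping-heavy step.

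For part (2), I would impose the Einstein condition $\Ric=\kappa\,g$, i.\,e.~equate the three distinct Ricci eigenvalues $\Ric(e_1,e_1)$, $\Ric(e_2,e_2)$, $\Ric(e_4,e_4)$. Normalizing $\alpha=1$ (permissible since the equations are homogeneous of fixed weight under rescaling, so one parameter may be fixed), this reduces to two polynomial equations in $\beta,\gamma$. Eliminating $\kappa$ gives one equation; the geometry forces a second relation, and solving the resulting system produces the cubic-radical expressions displayed — the appearance of $10^{1/3}$ and $10^{2/3}$ signals that one is solving a depressed cubic whose discriminant involves $10$. The scalar curvature $\Scalg=\sum_i\Ric(e_i,e_i)$ is then obtained by substitution. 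I would also verify that this Einstein solution does \emph{not} satisfy the naturally reductive condition $\alpha=5\beta=5\gamma$ from Theorem~\ref{thm-connection}, which is immediate from the numerical values.

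For part (3), I would compute the Riemannian holonomy algebra $\mathg{hol}(\nabla^g)\subset\so(5)$. The strategy is Ambrose--Singer: generate $\mathg{hol}$ from the image of the curvature operator $R(e_i,e_j)\in\so(5)$ together with its iterated covariant derivatives $\nabla^g_X R$, exploiting that on a homogeneous space these are all computable algebraically from $\Lambda_\m^g$ and the brackets. The aim is to show the resulting subalgebra is all of $\so(5)$. The main obstacle I expect is here: unlike the \emph{characteristic} connection, whose holonomy collapsed to the one-dimensional $\so(2)_\ir=\R\cdot X_3$ (Theorem~\ref{thm-connection}) because its torsion and curvature were highly constrained by invariance, the Levi-Civita curvature has no such torsion-absorption and its components spread across the full $\so(5)$. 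Concretely, I would exhibit enough independent curvature endomorphisms $R(e_i,e_j)$ and brackets $[R(e_i,e_j),R(e_k,e_l)]$ to span the $10$-dimensional $\so(5)$; the delicate point is ruling out that the holonomy reduces to a proper maximal subalgebra such as $\so(4)$, $\su(2)\oplus\su(2)$, or the irreducibly embedded $\so(3)_\ir$ itself. Since the metric is generic (three independent parameters) and the isotropy is only one-dimensional, I expect the curvature to act irreducibly on $\R^5$, and by the Berger classification an irreducible non-symmetric holonomy in dimension $5$ must be $\SO(5)$; verifying irreducibility of the curvature image is thus the crux, and I would do so by checking that no nonzero proper subspace of $\R^5$ is preserved by all $R(e_i,e_j)$.
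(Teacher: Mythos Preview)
Your proposal is correct and follows essentially the same route as the paper: the Levi-Civita connection is computed via the Wang map $\Lambda^g_\m:\m\to\so(5)$ (equivalently, your Koszul formula), the Ricci tensor and holonomy are then obtained by the same routine curvature computation indicated in the proof of Theorem~\ref{thm-connection}, and the Einstein condition is analysed by setting $\alpha=1$ and equating the three distinct Ricci eigenvalues. The only difference in emphasis is that the paper solves the Einstein system by first expressing $\beta_\pm,\gamma_\pm$ as roots of quadratics in $\kappa$ and then substituting into the remaining equation, and treats the holonomy purely by direct computation rather than invoking Berger's list; your Berger/irreducibility shortcut for part~(3) is a legitimate alternative but not what the paper does.
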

\begin{proof}
The Levi-Civita connection  $\nabla^g$ is the unique metric connection whose
torsion vanishes, hence a calculation similar to that in the proof of
Theorem \ref{thm-connection} yields that $\nabla^g$ corresponds to the map
$\Lambda^g_\m: \, \m\ra\so(5)$,
\begin{gather*}
\Lambda^g_\m(e_1)\,=\, \left[\frac{\sqrt{\alpha}}{5\beta} - 
\frac{2}{\sqrt{\alpha}}\right]\, E_{23}+ \left[\frac{1}{\sqrt{\alpha}} - 
\frac{\sqrt{\alpha}}{10\gamma}\right]\, E_{45},\quad
\Lambda^g_\m(e_2)\,=\,\frac{\sqrt{\alpha}}{5\beta}E_{13},\\
\Lambda^g_\m(e_3)\,=\, - \frac{\sqrt{\alpha}}{5\beta}E_{12},\quad
\Lambda^g_\m(e_4)\,=\, - \frac{\sqrt{\alpha}}{10\gamma}E_{15},\quad
\Lambda^g_\m(e_5)\,=\,  \frac{\sqrt{\alpha}}{10\gamma}E_{14}.
\end{gather*}
The Riemannian curvature, its Ricci tensor and the Riemannian holonomy
then follow from a lengthy, but routine calculation, see the proof
of Theorem \ref{thm-connection} for the general method.

Let us investigate the Einstein condition $g=\kappa\cdot\Ric$. It leads to 
the equations
\bdm
\kappa\,=\, \frac{1}{\gamma} - \frac{1}{50\gamma^2},\quad
\kappa\,=\, \frac{2}{25\beta^2}+\frac{1}{50\gamma^2},\quad
\kappa\,=\, \frac{1}{\beta} - \frac{2}{25\beta^2},
\edm
which in turn are equivalent to
\bdm
25\,\kappa\beta^2-25\,\beta+2\,=\,0,\quad
50\,\kappa\gamma^2-50\,\gamma+1\,=\,0,\quad
\frac{2}{25\,\beta^2}+\frac{1}{50\gamma^2}-\kappa\,=\,0.
\edm
The last condition implies that $\kappa\neq 0$, the metric
cannot be Ricci-flat. In this case, the general solution of
the first two quadratic equations is
\bdm
\beta_{\pm}\ =\ \frac{5\pm\sqrt{25-8\kappa}}{10\kappa},\quad
\gamma_{\pm}\ =\ \frac{5\pm \sqrt{25-2\kappa}}{10\kappa}.
\edm
Inserting all four combinations into the last equation, we obtain
four possible conditions for an Einstein metric. One then checks that
the combination $(\beta_+, \gamma_+)$ yields the Einstein metric given
in the theorem (satisfying, in particular, $\kappa<25/8$), while the 
other combinations have no admissible solutions.
\end{proof}
\begin{thm}[Contact properties]\label{thm.contact}
%
\begin{enumerate}
\item[] 
\item $(V^\ir_{2,4}, g^{\alpha\beta\gamma})$ carries two invariant
normal almost contact  metric structures, characterized by
\bdm
\xi\  \cong\ \eta=\ e_1, \quad \vphi_{\pm}\ =\ -E_{23} \pm E_{45},\quad
dF_{\pm}\ =\ 0.
\edm
Both normal almost contact metric structures 
admit a unique characteristic connection with the same
characteristic torsion
\bdm
T^c\ =\ \eta\wedge d\eta\ =\  \frac{2\sqrt{\alpha}}{5\beta}\,e_1\wedge e_2\wedge
e_3 - \frac{\sqrt{\alpha}}{5\gamma}\, e_1\wedge e_4\wedge e_5.
\edm
For a metric $g_{\alpha\beta\gamma}$ defining a nearly integrable $\SO(3)_\ir$
structure, this connection coincides with the characteristic connection of
the $\SO(3)_\ir$ structure.
\item The  invariant
normal almost contact  metric structure $(\xi,\eta,\vphi_{+})$ is
Sasakian if and only if $\alpha=25\beta^2=100\gamma^2$; it is in
addition an $\SO(3)_\ir$ structure for 
$(\alpha,\beta,\gamma)=(\frac{25}{36}, \frac{1}{6},\frac{1}{12})$. 

\end{enumerate}
\end{thm}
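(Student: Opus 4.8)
The plan is to reduce both parts to linear algebra on the reductive complement $\m = \langle e_1,\ldots,e_5\rangle$, using the commutator table and the connection data already assembled.

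For part (1), I would first verify the pointwise almost contact metric axioms for $\vphi_\pm = -E_{23}\pm E_{45}$, $\xi = e_1$, $\eta = e_1^\flat$. Because $E_{23}$ and $E_{45}$ have disjoint orthogonal supports $\langle e_2,e_3\rangle$ and $\langle e_4,e_5\rangle$, a direct computation gives $\vphi_\pm^2 = -E_{23}^2 - E_{45}^2 = -\Id + \eta\otimes\xi$, and the compatibility $g(\vphi_\pm X,\vphi_\pm Y) = g(X,Y)-\eta(X)\eta(Y)$ follows at once from $\vphi_\pm \in \so(5)$. Next I would compute the fundamental form $F_\pm(X,Y) = g(X,\vphi_\pm Y)$, obtaining $F_\pm = e_2\wedge e_3 \mp e_4\wedge e_5$, and evaluate $dF_\pm$ through the invariant-form identity $d\omega(X,Y,Z) = -\sum_{\mathrm{cyc}}\omega([X,Y]_\m,Z)$: the only brackets that can contribute land in $e_1$ (from $[e_2,e_3]_\m$ and $[e_4,e_5]_\m$) or back into the spanning pair, and in each triple the terms cancel, so $dF_\pm = 0$. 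The remaining structural input is normality, i.e. $N_{\vphi_\pm} + d\eta\otimes\xi = 0$, which I would check by evaluating the Nijenhuis torsion on all basis pairs.

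Once normality, $dF_\pm = 0$, and the fact that $\xi = e_1$ is Killing (which follows because the map $X\mapsto \nabla^g_X\xi = \Lambda^g_\m(X)e_1$ is skew, as one reads off from Theorem \ref{riem-curv-prop}) are in place, the existence and uniqueness of a characteristic connection is guaranteed and its torsion collapses to $T^c = \eta\wedge d\eta$. From $[e_2,e_3]_\m = -\tfrac{2\sqrt\alpha}{5\beta}e_1$ and $[e_4,e_5]_\m = \tfrac{\sqrt\alpha}{5\gamma}e_1$ one gets $d\eta = \tfrac{2\sqrt\alpha}{5\beta}\,e_2\wedge e_3 - \tfrac{\sqrt\alpha}{5\gamma}\,e_4\wedge e_5$, hence exactly the stated $T^c$. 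Comparing with Theorem \ref{thm-connection} shows that, when $g_{\alpha\beta\gamma}$ defines a nearly integrable $\SO(3)_\ir$ structure, $T^c$ equals $T^{\alpha\beta\gamma}$; since a metric connection with totally skew torsion is determined by its torsion via $\nabla = \nabla^g + \tfrac12 T$, the two characteristic connections coincide.

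For part (2), I would invoke the standard criterion that a \emph{normal} almost contact metric structure is Sasakian exactly when $\nabla^g_X\xi = -\vphi_+ X$ for all $X$. Since $\xi = e_1$ is invariant, Theorem \ref{riem-curv-prop} gives $\nabla^g_{e_i}\xi = \Lambda^g_\m(e_i)e_1$, namely $\nabla^g_{e_2}\xi = \tfrac{\sqrt\alpha}{5\beta}e_3$, $\nabla^g_{e_3}\xi = -\tfrac{\sqrt\alpha}{5\beta}e_2$, $\nabla^g_{e_4}\xi = -\tfrac{\sqrt\alpha}{10\gamma}e_5$, $\nabla^g_{e_5}\xi = \tfrac{\sqrt\alpha}{10\gamma}e_4$ (and $\nabla^g_{e_1}\xi = 0$). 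Matching these against $-\vphi_+ e_i$ forces $\tfrac{\sqrt\alpha}{5\beta}=1$ from the $\m_1$-directions and $\tfrac{\sqrt\alpha}{10\gamma}=1$ from the $\m_2$-directions, i.e. $\alpha = 25\beta^2 = 100\gamma^2$, and conversely these conditions make the identity hold on all of $\m$. Adding the nearly integrable constraint $\alpha\beta + 4\alpha\gamma - 25\beta\gamma = 0$ and substituting $\beta = \sqrt\alpha/5$, $\gamma = \sqrt\alpha/10$ reduces it to $\tfrac35\alpha^{3/2} = \tfrac12\alpha$, so $\sqrt\alpha = 5/6$ and $(\alpha,\beta,\gamma) = (\tfrac{25}{36},\tfrac16,\tfrac1{12})$.

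The hard part will be part (1): the rigorous verification of normality, where the Nijenhuis torsion must be evaluated with correct signs on the brackets in which the isotropy direction $e_0 \in \h$ has to be projected out before taking $\m$-components. Everything else is clean once $\Lambda^g_\m$ is available, and part (2) is then a short comparison of two explicit skew endomorphisms.
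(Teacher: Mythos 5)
Your proposal is correct and, for part (1), follows essentially the same route as the paper: restrict to invariant structures with $\xi=e_1$, compute $d\eta$ and $F_\pm$ from the $\m$-components of the brackets, verify $N_{\vphi_\pm}=0$ and $dF_\pm=0$, and invoke \cite[Thm 8.2]{Friedrich&I1} to collapse the characteristic torsion to $\eta\wedge d\eta$; the numerical values $[e_2,e_3]_\m=-\tfrac{2\sqrt{\alpha}}{5\beta}e_1$, $[e_4,e_5]_\m=\tfrac{\sqrt{\alpha}}{5\gamma}e_1$ and the resulting $T^c$ all check out. (The paper additionally derives that $\eps_1E_{23}+\eps_2E_{45}$ are the \emph{only} invariant choices of $\vphi$, by demanding commutation with $d\lambda(\tilde e_0)$ on $\langle e_1\rangle^\perp$ -- worth including if you want the word ``characterized'' to carry its full weight.) In part (2) you diverge: the paper tests the contact-metric condition $2F_+=d\eta$ directly on the coefficients of $d\eta$, whereas you use the equivalent criterion $\nabla^g_X\xi=-\vphi_+X$ for a normal structure, read off from $\Lambda^g_\m(e_i)e_1$. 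Both yield $\sqrt{\alpha}=5\beta=10\gamma$, and your substitution into the nearly integrable constraint $\alpha\beta+4\alpha\gamma-25\beta\gamma=0$ correctly gives $\sqrt{\alpha}=5/6$, hence $(\tfrac{25}{36},\tfrac16,\tfrac1{12})$. Your route buys a check that is internal to the already-computed Levi-Civita data $\Lambda^g_\m$ (no separate comparison of $2$-forms), at the cost of having to justify the equivalence ``normal $+$ $\nabla^g\xi=-\vphi$ $\Longleftrightarrow$ Sasaki'' in the paper's normalization $2F=d\eta$; the paper's version is the more economical one-line coefficient comparison. The only genuinely unfinished step in both your write-up and the paper's is the explicit evaluation of $N_{\vphi_\pm}$, and you correctly flag the need to project out the $e_0$-components of $[e_2,e_3]$ and $[e_4,e_5]$ before substituting into the Nijenhuis tensor.
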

\begin{proof}
An almost contact structure ist given by a vector field $\xi$ with dual
$1$-form $\eta$ and a $(1,1)$-tensor field $\vphi$ such that $\vphi^2=-\Id+
\eta\ox\xi$. In order to become an almost contact \emph{metric} structure,
a compatibility condition about the underlying Riemannian metric is required,
\bdm
g(\vphi(X),\vphi(Y))\ =\ g(X,Y)-\eta(X)\eta(Y).
\edm
With the Sasaki case already in mind, $e_1$ -- being a Killing vector field --
is a natural choice for $\xi \cong \eta$. Furthermore, it is reasonable
in our setting to restrict our attention to invariant structures, i.\,e.~endomorphisms $\vphi:\ \langle e_1 \rangle^\perp \ra \langle e_1 \rangle^\perp$ that 
are invariant under the isotropy representation. These are precisely
the endomorphisms commuting with $d\lambda(\tilde{e}_0)$, hence
$\vphi$ has to be of the form $\eps_1 E_{23} + \eps_2 E_{45},\ \eps_{1,2}=\pm 1$.
This leads to the two only inequivalent cases described in (1). The almost 
contact metric structure will be called \emph{normal} if its
Nijenhuis tensor $N_\vphi$ vanishes, 
\bdm
N_{\vphi}(X,Y)\ =\ [\vphi(X),\vphi(Y)] - \vphi [X,\vphi(Y)] - 
 \vphi [\vphi(X),Y] +\vphi^2[X,Y]+ d\eta(X,Y)\xi.
\edm
One computes $d\eta=\frac{2\sqrt{\alpha}}{5\beta} e_2\wedge e_3- 
\frac{\sqrt{5}}{5\gamma}e_4\wedge e_5$ and uses this fact to check
that, indeed, $N_{\vphi_\pm}=0$. The fundamental form of a contact
structure is defined by $F(X,Y):=g(X,\vphi(Y))$, hence one obtains
in our case
\bdm
F_{\pm}\ = \ e_2\wedge e_3 \mp e_4\wedge e_5.
\edm
By a routine calculcation, onw shows that
$d(e_2\wedge e_3)=d (e_4\wedge e_5)=0$, hence $dF_{\pm}=0$ and the general
expression for the torsion of an almost contact metric structure
 \cite[Thm 8.2]{Friedrich&I1} is reduced to $T^c=\eta\wedge d\eta$, and
hence yields the formula  stated in (1).

In order to become Sasakian, only $2F=d\eta$ still needs to be satisfied.
Thus, $F_{-}$ can never be Sasakian and $F_{+}$ has to satisfy the
condition $\alpha=25\beta^2=100\gamma^2$. This finishes the proof.
\end{proof}
\begin{NB}
Together with Theorem \ref{riem-curv-prop}, we conclude that all
Sasaki metrics on  $V^\ir_{2,4}$ are non-Einstein, contrary to the
standard Sasaki metric on $V^\st_{2,4}$ (see, for example, \cite[Ch. 4.3]{BFGK}
for a detailed discussion of this Einstein-Sasaki structure and its
properties). 

Geometrically speaking, the set of Sasaki metrics defines a roughly square root
shaped curve in the positive quadrant 
$\{(\alpha,\beta,\gamma)\in\R^3\ :\ \alpha>0,\ \beta>0,\ \gamma>0\}$
that intersects the ruled surface of nearly integrable $SO(3)_\ir$ metrics 
in exactly one point, as described in (2).
\end{NB}
The twisted Stiefel manifold $V^\ir_{2,4}=\SO(3)\x \SO(3)/ \SO(2)_\ir$
has a non-compact partner, namely, $\tilde{V}^\ir_{2,4}:=
\SO(2,1)\x \SO(3)/ \SO(2)_\ir$. We realize $\so(2,1)$ as
\bdm
\so(2,1)\ =\ \langle a_1, a_2, a_3 \rangle \text{ with basis }
a_1\, =\, \begin{bmatrix}0 & 0 & 0\\ 0 & 0 & -1\\ 0 & -1 & 0\end{bmatrix},\
a_2\, =\, \begin{bmatrix} 0 & 0 & 1 \\ 0 & 0 & 0\\ 1 & 0 & 0 \end{bmatrix},\
a_3\, =\, \begin{bmatrix} 0 & -1 & 0\\ 1 & 0 & 0 \\ 0 & 0 & 0\end{bmatrix}.
\edm
The commutator relations are $[a_1,a_2]=-a_3,\ [a_2,a_3]=a_1,\ [a_3,a_1]=a_2$.
For $\so(3)$, we use the same basis $b_1,\ b_2, b_3$ as before.
The embedding of $H=SO(2)$ into 
$G=\SO(2,1)\x \SO(3)$ is also unchanged, namely, with generator $a_3+2 b_3$,
and a good choice for a reductive complement $\m$ is
\bdm
\m\,=\, \n\oplus \m_1\oplus\m_2,\quad
\n\,=\, \R \cdot (b_3-2a_3),\quad
\m_1\,=\, \langle a_1,  a_2\rangle,\quad
\m_2\,=\, \langle  b_1,  b_2\rangle.
\edm
The isotropy representation is given by $\lambda:\so(2)\ra\so(5),\
\lambda (a_3+2b_3)=E_{23}+2 E_{45}$, i.\,e.~it turns out to agree with the result
in the compact case. A metric $g_{\alpha\beta\gamma}$ is then defined
by deformation in the three summands of $\m$ as before.

Yet, there are a few differences worth
noticing. We summarize the results in the following theorem without
proof (see \cite{JBB} for details).
\begin{thm}[Properties of $\tilde{V}^\ir_{2,4}=\SO(2,1)\x \SO(3)/ \SO(2)_\ir$]
The reductive homogeneous space $(\tilde{V}^\ir_{2,4}, g_{\alpha\beta\gamma})$ 
has the following properties:
\begin{enumerate}
\item For parameters $\alpha, \beta,\gamma >0$ satisfying
$-\alpha\beta+4\,\gamma\alpha+25\,\beta\gamma=0$, the $\SO(3)_\ir$ structure
is nearly integrable and the  torsion $T^{\alpha\beta\gamma}$ of its
characteristic connection $\nabla^{\alpha\beta\gamma}$  is, in a suitable
orthonormal basis, given by
\bdm
T^{\alpha\beta\gamma} \ =\ - \frac{2\sqrt{\alpha}}{5\beta}\,e_1\wedge e_2\wedge
e_3 - \frac{\sqrt{\alpha}}{5\gamma}\, e_1\wedge e_4\wedge e_5.
\edm
Its holonomy is $\SO(2)_\ir\subset \SO(5)$ and its torsion is parallel,
$\nabla^{\alpha\beta\gamma} T^{\alpha\beta\gamma} =0$.  
\item The metric of the nearly integrable $\SO(3)_\ir$ structure is never
naturally reductive and never Einstein.
\end{enumerate}
\end{thm}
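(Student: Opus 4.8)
The plan is to proceed exactly as in the compact case (Theorem~\ref{thm-connection} and Theorem~\ref{riem-curv-prop}), replacing $\SO(3)$ by $\SO(2,1)$ in the first factor and carefully tracking the sign changes that the indefinite Killing form of $\so(2,1)$ introduces into the structure constants. First I would record the non-vanishing commutator relations for the renormalized orthonormal basis $e_0,\ldots,e_5$. Using $[a_1,a_2]=-a_3$, $[a_2,a_3]=a_1$, $[a_3,a_1]=a_2$ for $\so(2,1)$ and the unchanged $\so(3)$ relations, the brackets $[e_2,e_3]$ and $[e_4,e_5]$ acquire the opposite relative sign compared to the compact case; this is precisely what will flip the coefficient of $e_1\wedge e_2\wedge e_3$ in the torsion from $+\tfrac{2\sqrt{\alpha}}{5\beta}$ to $-\tfrac{2\sqrt{\alpha}}{5\beta}$ and change the nearly integrable condition from $\alpha\beta+4\gamma\alpha-25\beta\gamma=0$ to $-\alpha\beta+4\gamma\alpha+25\beta\gamma=0$.

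For part~(1), since the isotropy representation $\lambda$ is the same as in the compact case, Wang's theorem gives the same Ansatz for an equivariant $\Lambda_\m:\m\to\so(5)$ taking values in $\so(3)_\ir$, and the only $\lambda$-invariant $3$-forms are again $e_1\wedge e_2\wedge e_3$ and $e_1\wedge e_4\wedge e_5$. Evaluating the torsion formula $T(X,Y)_o=\Lambda_\m(X)Y-\Lambda_\m(Y)X-[X,Y]_\m$ on all basis pairs, I would solve for the parameters $a,m,n$; the sign-flipped bracket $[e_2,e_3]$ forces the modified nearly integrable relation and the stated torsion. Parallelism of $T$ follows verbatim as before: invariant tensors are parallel for the canonical connection, so $\nabla^{\alpha\beta\gamma}_{e_i}T=\Lambda_\m(e_i)T$, and one checks $\nabla^{\alpha\beta\gamma}(e_2\wedge e_3)=\nabla^{\alpha\beta\gamma}(e_4\wedge e_5)=0$ via the action formula on $2$-forms, whence $\nabla^{\alpha\beta\gamma}T=0$ and $\mathg{hol}=\R\cdot X_3=\so(2)_\ir$.

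For part~(2), I would impose natural reductivity $T(X,Y,Z)=-g([X,Y]_\m,Z)$ and show it is now inconsistent: comparing with the commutator relations gives simultaneous linear constraints on $\alpha,\beta,\gamma$ that, because of the sign change in the $\n$-component of $[e_2,e_3]$ and $[e_4,e_5]$, admit no positive solution (in the compact case the analogous constraints collapsed to $\alpha=5\beta=5\gamma$). For the Einstein assertion I would recompute the Levi-Civita map $\Lambda^g_\m$ and the resulting Ricci tensor exactly as in Theorem~\ref{riem-curv-prop}, obtaining the three eigenvalue equations $\kappa=\Ric(e_i,e_i)^{-1}\cdot(\text{const})$, and then show the resulting quadratic system has no common positive root. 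The main obstacle is the last step: I expect the Einstein system to factor differently than in the compact case, so rather than exhibiting a solution one must prove \emph{non-existence} over $\alpha,\beta,\gamma>0$, which requires showing that the discriminant or sign conditions on $\beta_\pm,\gamma_\pm$ are never simultaneously satisfiable. Verifying this cleanly—rather than by numerical search—will be the delicate point, but it is reducible to a finite case analysis of the four sign combinations analogous to the compact argument.
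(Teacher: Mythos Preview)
Your approach is correct and is exactly the intended one. The paper itself does not prove this theorem---it states explicitly ``We summarize the results in the following theorem without proof (see \cite{JBB} for details)''---so there is nothing to compare against beyond the compact-case template (Theorems~\ref{thm-connection} and~\ref{riem-curv-prop}), which you are correctly adapting by tracking the single sign change $[a_1,a_2]=-a_3$ through the bracket $[e_2,e_3]$ while all other structure constants stay the same.

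One small reassurance on part~(2): the Einstein non-existence is likely cleaner than you fear. The sign flip in $[e_2,e_3]$ reflects the negative curvature of the hyperbolic factor $\SO(2,1)/\SO(2)$ replacing the spherical factor $\SO(3)/\SO(2)$; once you recompute $\Lambda^g_\m$ and the diagonal Ricci entries you should find that $\Ric(e_1,e_1)$ remains a sum of positive terms while the $\tfrac{1}{\beta}$ contribution to $\Ric(e_2,e_2)$ flips sign, forcing $\Ric(e_2,e_2)<0$ for all $\alpha,\beta>0$. If so, the Einstein condition is excluded on sign grounds alone and the four-case $\beta_\pm,\gamma_\pm$ analysis from the compact argument is unnecessary. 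Similarly, for natural reductivity the condition $T(e_2,e_3,e_1)=-g([e_2,e_3]_\m,e_1)$ combined with the $[e_1,e_2]$ relation now forces $\alpha=-5\beta$, which already has no positive solution.
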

\subsection{The manifold $W^\ir=\R\x (\SL(2,\R)\ltimes\R^2)/ \SO(2)_\ir$}
%
In contrast to the compact and non-compact twisted Stiefel manifold,
this space will turn out to carry a full $\SO(3)_\ir$ structure
(i.\,e.~not only a   $\SO(2)_\ir$ structure) and will prove that
the torsion of nearly integrable $\SO(3)_\ir$ structure is not necessarily
parallel. Let $G=\R\x (\SL(2,\R)\ltimes\R^2)$, and
take as standard basis of $\slin(2,\R)$ 
\bdm
X\ =\ \begin{bmatrix}1 & 0 \\ 0 & -1\end{bmatrix},\quad
E_+\ =\ \begin{bmatrix}0 & 1 \\ 0 & 0\end{bmatrix},\quad
E_-\ =\ \begin{bmatrix}0 & 0 & \\ 1 & 0\end{bmatrix},\quad
[X,E_\pm]\ =\ \pm 2E_\pm,\quad [E_+,E_-]\ =\ X.
\edm
We choose a basis for $\g=\R\oplus \slin(2,\R)\oplus \R^2$ that
depends on a parameter $\mu\in\R$,
\bdm
\bar e^\mu_0=E_+-E_-+\mu, \
\bar e^\mu_1= 1-\mu(E_+-E_-),\
\bar e_2=(0,1)^t,\  \bar e_3 = (1,0)^t,\
\bar e_4=E_+ + E_-,\
\bar e_5=X.
\edm
The element $\bar e^\mu_0$ generates a one-dimensional subgroup $H_\mu$ of $G$
isomorphic to $\SO(2)$, the isotropy representation is
$\lambda: \h_\mu \ra\so(5),\ \lambda(\bar e_0)=E_{23}+2 E_{45}$.
In particular, $\mu=0$ corresponds to the standard embedding
$\so(2)\ra \slin(2,\R)$; we will see later that, as to be expected,
$\mu=0$ is not admissible for a nearly integrable $\SO(3)_\ir$ structure.
The subspace 
\bdm
\m\ =\ \n^\mu \oplus \m_1\oplus \m_2,\quad
\n^\mu \ =\ \R\cdot \bar e^\mu_1,\quad 
\m_1\ =\  \langle \bar e_2, \bar e_3\rangle,\quad
\m_2\ =\  \langle \bar e_4, \bar e_5\rangle, 
\edm
is a reductive
complement of $\h_\mu$ in $\g$, and each if its summands is isotropy invariant.
We can thus define a homogeneous metric on $G/H_\mu$ by 
$g_{\alpha\beta\gamma}\ =\ \diag (\alpha, \beta,\beta,\gamma,\gamma),\ 
\alpha,\beta,\gamma\, >\, 0$. We drop the bar to denote the rescaled 
elements that form an orthonormal basis for this metric, and agree to write
$e^\mu_0$  for $\bar e^\mu_0$ as well. All non-vanishing commutator relations
between these new base vectors are
\begin{gather*}
[e_1^\mu,e_2]\ =\ - \frac{\mu}{\sqrt{\alpha}}e_3,\quad
[e_1^\mu,e_3]\ =\  \frac{\mu}{\sqrt{\alpha}}e_2,\quad
[e_1^\mu,e_4]\ =\ - \frac{2\mu}{\sqrt{\alpha}}e_5,\quad
[e_1^\mu,e_5]\ =\  \frac{2\mu}{\sqrt{\alpha}}e_4 \\
[e_2,e_4]\ =\ -\frac{1}{\sqrt{\gamma}}e_3,\quad
[e_2,e_5]\ =\ \frac{1}{\sqrt{\gamma}}e_2,\quad
[e_3,e_4]\ =\ -\frac{1}{\sqrt{\gamma}}e_2,\quad
[e_3,e_5]\ =\ -\frac{1}{\sqrt{\gamma}}e_3\\
[e_4,e_5]\ =\ \frac{2}{\gamma(\mu^2+1)}(\mu\sqrt{\alpha} e_1- e_0).
\end{gather*}
Observe that these do not only depend on the metric, but also on the embedding
parameter $\mu$.
\begin{thm}[Properties of $W^\ir=\R\x (\SL(2,\R)\ltimes\R^2)/ \SO(2)_\ir$]
The reductive homogeneous space $(W^\ir, g_{\alpha\beta\gamma})$ 
has the following properties:
\begin{enumerate}
\item For any $\beta>0$ and parameters $\alpha, \gamma >0$ satisfying
$\alpha\geq 12\gamma $, the 
$\SO(3)_\ir$ structure
is nearly integrable for the two embeddings of $\SO(2)\cong H_\mu\ra\SO(5)$  
defined by
\bdm
\mu\  =\  \frac{\sqrt{\alpha}\pm
\sqrt{\alpha-12\gamma}}{ 2\sqrt{3\gamma}}
\edm
 and the  torsion $T^{\alpha\beta\gamma}$ of its
characteristic connection $\nabla^{\alpha\beta\gamma}$  is, in a suitable
orthonormal basis, given by
\bdm
T^{\alpha\beta\gamma} \ =\  - \frac{2\sqrt{3}}{\sqrt{\gamma}}
\, (e_1\wedge e_2\wedge e_3 +  e_1\wedge e_4\wedge e_5).
\edm
Its holonomy is $\SO(3)_\ir\subset \SO(5)$. Its torsion is \emph{not}
parallel,
$\nabla^{\alpha\beta\gamma} T^{\alpha\beta\gamma} \neq 0$, but it is
divergence-free, $\delta T^{\alpha\beta\gamma}=0$.  
\item The metric of the nearly integrable $\SO(3)_\ir$ structure is never
naturally reductive and never Einstein.
\end{enumerate}
\end{thm}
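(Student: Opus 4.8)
The plan is to follow exactly the computational scheme established in the proof of Theorem \ref{thm-connection}, transplanted to the new homogeneous space $W^\ir = G/H_\mu$. The essential structural difference is that here the embedding parameter $\mu$ is a genuine unknown: rather than fixing the inclusion $H \hookrightarrow G$ and searching for admissible metrics, we must simultaneously solve for $\mu$ and for the metric parameters $(\alpha,\beta,\gamma)$ so that the canonical $\SO(3)_\ir$ connection exists and is nearly integrable. First I would apply Wang's theorem \cite[X.2]{Kobayashi&N2} to parametrize invariant metric connections by equivariant maps $\Lambda_\m : \m \ra \so(5)$ satisfying the equivariance condition $(*)$. As before, the isotropy representation has $d\lambda(e_0^\mu) = E_{23} + 2E_{45} = X_3$, so the Schur-type analysis of which $X_i$-components survive the equivariance constraint proceeds identically; the most general $\SO(3)_\ir$-valued Ansatz will again express $\Lambda_\m(e_i)$ as a constrained linear combination of $X_1, X_2, X_3$.

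Next I would compute the torsion via $T(X,Y)_o = \Lambda_\m(X)Y - \Lambda_\m(Y)X - [X,Y]_\m$, using the displayed commutator relations for $W^\ir$, and impose that $T$ be totally antisymmetric (so that $\nabla^{\alpha\beta\gamma}$ is the characteristic connection) and that the structure be nearly integrable. Matching coefficients on all pairs $e_i \wedge e_j$ yields an overdetermined algebraic system in the unknowns $\mu, \alpha, \beta, \gamma$ together with the connection coefficients. I expect this system to force a quadratic relation in $\mu$ whose coefficients involve $\alpha$ and $\gamma$; solving it produces precisely the two roots $\mu = (\sqrt{\alpha} \pm \sqrt{\alpha - 12\gamma})/(2\sqrt{3\gamma})$, with reality of the discriminant giving the constraint $\alpha \geq 12\gamma$. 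Substituting back should collapse the torsion to the stated symmetric form $T^{\alpha\beta\gamma} = -\frac{2\sqrt{3}}{\sqrt{\gamma}}(e_1 \wedge e_2 \wedge e_3 + e_1 \wedge e_4 \wedge e_5)$, the equal-coefficient structure being the algebraic signature that the holonomy is the \emph{full} $\SO(3)_\ir$ rather than merely $\SO(2)_\ir$.

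For the holonomy claim I would compute the curvature by $R(X,Y)_o = [\Lambda_\m(X), \Lambda_\m(Y)] - \Lambda_\m([X,Y]_\m) - \lambda([X,Y]_\h)$ and examine the Lie algebra generated by the curvature endomorphisms under iterated bracketing with $\Lambda_\m(\m)$, exactly as in the holonomy computation of Theorem \ref{thm-connection}; the expectation is that the curvature values now span all three generators $X_1, X_2, X_3$, giving $\mathg{hol} = \so(3)_\ir$. The non-parallelism $\nabla^{\alpha\beta\gamma} T \neq 0$ follows by applying the canonical-connection identity $\nabla_{e_i}^{\alpha\beta\gamma} T = \Lambda_\m(e_i) T$ with the action formula $(**)$ on $2$-forms and verifying that at least one $\Lambda_\m(e_i)(e_2 \wedge e_3)$ or $\Lambda_\m(e_i)(e_4 \wedge e_5)$ fails to vanish — a direct contrast with the twisted Stiefel case where both vanished. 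The divergence-free property $\delta T = 0$ I would extract either from invariance of $T$ together with the structure of $\Lambda_\m$, or by the coderivative formula, checking the relevant contractions vanish. Part (2), that the metric is never naturally reductive and never Einstein, reduces to showing the natural-reductivity equation $T(X,Y,Z) = -g([X,Y]_\m, Z)$ has no solution in the admissible parameter range and that the Ricci tensor (computed from the Levi-Civita $\Lambda_\m^g$ as in Theorem \ref{riem-curv-prop}) cannot be made proportional to $g_{\alpha\beta\gamma}$.

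The main obstacle will be the coupled algebraic system in four unknowns: unlike the single scalar relation $\alpha\beta + 4\gamma\alpha - 25\beta\gamma = 0$ of the compact case, here the presence of $\mu$ makes the near-integrability and antisymmetry constraints genuinely nonlinear and tightly coupled, and isolating the clean quadratic in $\mu$ with discriminant $\alpha - 12\gamma$ requires careful bookkeeping of the $\mu$-dependent commutators $[e_4, e_5] = \frac{2}{\gamma(\mu^2+1)}(\mu\sqrt{\alpha}\, e_1 - e_0)$. I anticipate that the non-Einstein verification in part (2) will also be delicate, since one must rule out Einstein solutions over the entire two-parameter admissible region $\{\alpha \geq 12\gamma\}$ rather than at a single point, likely forcing an argument that the resulting Ricci-proportionality equations are inconsistent identically in the parameters.
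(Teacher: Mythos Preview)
Your proposal is correct and follows essentially the same computational scheme as the paper, which likewise uses Wang's theorem, the torsion formula $(*)$, and the derivative formula $(**)$ to reduce everything to the quadratic $\sqrt{3}\gamma\,\mu^2 - \sqrt{\alpha\gamma}\,\mu + \sqrt{3}\gamma = 0$. The only notable tactical simplifications in the paper are the choice of an adapted basis $Y_1,Y_2,Y_3$ of $\so(3)_\ir$ in place of $X_1,X_2,X_3$, the observation that natural reductivity fails simply because $\Lambda_\m\neq 0$ throughout the admissible range, and---alleviating your final worry---the fact that $\Ric^g(e_2,e_2)=\Ric^g(e_3,e_3)=0$ identically while $\Ric^g(e_1,e_1)>0$, so the non-Einstein conclusion is immediate rather than delicate.
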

\begin{proof}
In contrast to the spaces treated before, we choose the more appropriate
basis of $\so(3)_\ir\subset\so(5)$
\bdm
Y_3\ :=\ h\ =\ E_{23}+2E_{45},\quad
Y_1\ :=\ -\sqrt{3}E_{12}+E_{35}+E_{24},\quad
Y_2\ :=\ -\sqrt{3}E_{13}+E_{25}-E_{34}.
\edm
These elements satisfy the same commutator rules as the previously used 
basis $X_1,X_2,X_3$. We describe a metric connection 
$\Lambda_\m:\h_\mu\ra \so(3)_\ir$ and its invariant torsion $T$ via the 
standard  Ansatz ($a,b,c,m,n\in\R$)
\begin{gather*}
\Lambda_\m(e_1^\mu)\ =\ aY_3,\quad
\Lambda_\m(e_2^\mu)\ =\ b Y_1-cY_2,\quad 
\Lambda_\m(e_3)\ =\ c Y_1+b Y_2,\quad
\Lambda_\m(e_4)\ =\ \Lambda_\m(e_5)\ =\ 0,\\
T\ =\ m\, e_1^\mu\wedge e^\mu_2\wedge e_3 + n \, e^\mu_1\wedge e_4\wedge e_5.
\end{gather*}
Using the relation ($*$) between $T$ and $\Lambda_\m$, one checks that 
the coefficients have to satisfy
$b=0,\, c=1/\sqrt{\gamma}$ and
\bdm
m\ =\ a -\frac{\sqrt{3}}{\sqrt{\gamma}}+\frac{\mu}{\sqrt{\alpha}},\quad
n\ =\ 2a + 2 \frac{\mu}{\sqrt{\alpha}},\quad
m\ =\ - \frac{2\sqrt{3}}{\sqrt{\gamma}},\quad
n\ =\ -\frac{2\mu\sqrt{\alpha}}{\gamma(\mu^2+1)}.
\edm
Thus, a short calculation yields $a=-\sqrt{3/\gamma}-\mu/\sqrt{\alpha}$
and the quadratic equation for the embedding parameter $\mu$
\bdm
\sqrt{3}\gamma\,\mu^2-\sqrt{\alpha\gamma}\,\mu+\gamma\sqrt{3}\ =\ 0.
\edm
It has real solutions if and only if $\alpha\geq 12\gamma$;  for further use, 
we give the final expression for the non-vanishing parts of the connection 
map $\Lambda_\m$:
\bdm
\Lambda_\m(e_1)\ =\ - \left[\frac{\sqrt{3}}{\sqrt{\gamma}}
+\frac{\mu}{\sqrt{\alpha}}\right]Y_3,\quad
\Lambda_\m(e_2)\ =\ -\frac{1}{\sqrt{\gamma}}Y_2,\quad
\Lambda_\m(e_3)\ =\ \frac{1}{\sqrt{\gamma}}Y_2,
\edm
where it is understood that $\mu$ takes one of the two admissible
values. We  observe that $\Lambda_\m =0$ is not in the
the admissible parameter range, hence the nearly integrable $\SO(3)_\ir$
structure is never naturally reductive. Using formula $(**)$ for the
action of $\nabla^{\alpha\beta\gamma}$ on $2$-forms, one checks that $T$
has the non vanishing covariant derivatives
\bdm
\nabla^{\alpha\beta\gamma}_{e_2}T\ =\ \Lambda_\m(e_2)T\ =\
-\frac{6}{\gamma}e_3\wedge e_4\wedge e_5,\quad
\nabla^{\alpha\beta\gamma}_{e_3}T\ =\ \Lambda_\m(e_3)T\ =\
\frac{6}{\gamma}e_2\wedge e_4\wedge e_5 .
\edm
One next computes the map $\Lambda_\m^g$ representing the Levi-Civita
connection, 
\begin{gather*}
\Lambda^g_\m(e_1)\ =\ -\frac{\mu}{\sqrt{\alpha}}E_{23}-
\left[ \frac{2\mu}{\sqrt{\alpha}} +\frac{\sqrt{3}}{\sqrt{\gamma}} \right] E_{45},\quad
\Lambda^g_\m(e_2)\ =\ \frac{1}{\gamma}(E_{34}-E_{25}),\\
\Lambda^g_\m(e_3)\ =\ \frac{1}{\gamma}(E_{24}+E_{35}),\quad
\Lambda^g_\m(e_4)\ =\ -\frac{\sqrt{3}}{\sqrt{\gamma}}E_{15},\quad
\Lambda^g_\m(e_5)\ =\ \frac{\sqrt{3}}{\sqrt{\gamma}}E_{14}.
\end{gather*}
One deduces that the only non-vanishing  $\nabla^g$-derivatives of the 
elementary invariant forms 
$e_{123}$ and $e_{145}$ are
\bdm
\nabla^g_{e_2}e_{123}\ =\ \frac{1}{\gamma}(e_{135}+e_{124}),\ \
\nabla^g_{e_3}e_{123}\ =\ \frac{1}{\gamma}(e_{125} - e_{134}),\ \ 
\nabla^g_{e_4}e_{123}\ =\ - \frac{\sqrt{3}}{\sqrt{\gamma}}e_{235},\ \ 
\nabla^g_{e_4}e_{123}\ =\  \frac{\sqrt{3}}{\sqrt{\gamma}}e_{234}
\edm
as well as
\bdm
\nabla^g_{e_2}e_{145}\ =\ - \frac{1}{\gamma}(e_{135}+e_{124}),\quad
\nabla^g_{e_3}e_{145}\ =\  \frac{1}{\gamma}(e_{134} - e_{125}),\quad
\edm
One thus checks that 
\bdm
d e_{123}\ =\ -\frac{2\sqrt{3}}{\sqrt{\gamma}} e_{2345}, \quad
d e_{145}\ =\ 0,\quad dT\ =\  \frac{6}{\gamma} e_{2345} \neq\ 0,\quad
\delta T = -\sum_{i=1}^5 e_i\haken \nabla^g_{e_i}T = 0.
\edm
A computation of the Riemannian curvature tensor and its contraction shows that
the Ricci tensor is diagonal, with
\bdm
\Ric^g (e_1,e_1)\ =\ \frac{2\mu^2\alpha}{\gamma^2(\mu^2+1)},\quad
\Ric^g (e_2,e_2)\ =\ \Ric^g (e_3,e_3)\ =\ 0,\quad
\Ric^g (e_4,e_4)\ =\ \Ric^g (e_5,e_5),
\edm
and with the somehow uninspiring expression
\bdm
\Ric^g (e_4,e_4)\ =\ -\frac{(\alpha+6\gamma)( \mu^4 + 2\mu^2) +6\gamma }{\gamma^2(\mu^2+1)^2} .
\edm
However, it is plain that the space will never be Einstein.
\end{proof}
\begin{NB}
It is a natural question to ask in this case, as before for the
twisted Stiefel manifold, about compatible contact structures.
It turns out that the picture is rather different. As explained
in Theorem \ref{thm.contact} and its proof,
\bdm
\xi\  \cong\ \eta=\ e_1, \quad \vphi_{\pm}\ =\ -E_{23} \pm E_{45},
\edm
is a natural choice for  an almost contact structure (basically, because the
isotropy representation is the same). However,
one checks that the Nijenhuis tensor of $\vphi_+$ is not a $3$-form,
hence, by \cite[Thm 8.2]{Friedrich&I1}, it does not admit
an invariant metric connection with skew-symmetric torsion.
For $\vphi_-$,  the Nijenhuis tensor vanishes and $d F_-=0$,
but the corresponding contact connection has torsion
\bdm
T^-\ =\ \eta\wedge d\eta\ =\ -\frac{2\sqrt{3}}{\sqrt{\gamma}}\, e_{145}
\ \neq\ T^{\alpha\beta\gamma}.
\edm
Thus, the almost metric contact structure defined by $(\eta,\vphi_-)$
is not compatible with the $\SO(3)_\ir$ structure.
\end{NB}
\begin{NB}
In \cite[p.77 and p.88]{Bobienski&N07}, it was claimed that $W^\ir$ with the
standard embedding of $\SO(2)\ra \SL(2,\R)$ (corresponding to $\mu=0$
in our notation) carries an $\SO(3)_\ir$ structure (that was not further investigated). The results above
correct this point.
\end{NB}
%
    

\begin{thebibliography}{1111111}
%
\bibitem[Ag06]{Agricola06}
I. Agricola, \emph{The Srn\'{\i} lectures on non-integrable geometries with 
torsion}, Arch. Math. (Brno) 42 (2006), 5--84. With an appendix by 
Mario Kassuba.
%
%
\bibitem[Agr03]{Agricola03}
\bysame, \emph{Connections on naturally reductive spaces, their
Dirac operator and homogeneous models in string theory}, Comm. 
Math. Phys. 232 (2003), 535-563.
%
\bibitem[AF06]{Agri&F05}
I. Agricola and Th. Friedrich, \emph{Geometric structures of vectorial type},
J. Geom. Phys. 56 (2006), 2403-2414.
%
%
\bibitem[AFS05]{Alexandrov&F&S04}
B. Alexandrov, Th. Friedrich, Nils Schoemann, \emph{Almost Hermitian 
$6$-manifolds revisited}, J. Geom. Phys. 53 (2005), 1-30.
%
\bibitem[At70]{Atiyah70}
M. F. Atiyah, \emph{Vector fields on manifolds},
Arbeitsgemeinschaft Forsch. Nordrhein-Westfalen Heft 200, 26 S. (1970). 
%
\bibitem[BFGK91]{BFGK}
H. Baum, Th. Friedrich, R. Grunewald, I. Kath, \emph{Twistors and Killing
spinors on Riemannian manifolds}, Teubner-Texte zur Mathematik, Band 124,
Teubner-Verlag Stuttgart / Leipzig, 1991.
%
\bibitem[BB09]{JBB}
J. Becker-Bender, \emph{$SO(3)$-Strukturen auf $5$-dimensionalen Mannigfaltigkeiten}, diploma thesis, Humboldt-Universit\"at zu Berlin, April 2009.
%
\bibitem[Bob06]{Bobienski06}
M. Bobie\'{n}ski, \emph{The topological obstructions to the existence of 
an irreducible $\SO(3)$-structure on a five manifold}, math.DG/0601066.
%
\bibitem[BN07]{Bobienski&N07}
M. Bobie\'{n}ski and P. Nurowski, \emph{Irreducible $\SO(3)$-geometries
in dimension five}, J. Reine Angew. Math. 605 (2007), 51-93.
%
\bibitem[CF07]{Chiossi&F}
S. Chiossi and A. Fino, \emph{Nearly integrable $\mathrm{SO}(3)$ structures on 
$5$-dimensional Lie groups}, J. Lie Theory 17 (2007), 539-562.
%
\bibitem[DW59]{DW59}
A. Dold, H. Whitney, \emph{Classification of oriented sphere bundles over
$4$-complexes}, Ann. of Math. 69, (1959), 667-677.
%
\bibitem[Fr02]{Dirac-Buch}
Th. Friedrich, \emph{Dirac operators in Riemannian geometry},
Graduate Studies in Mathematics 25, AMS, Providence, Rhode Island, 2000.
%
\bibitem[Fri03b]{Fri2}
\bysame, \emph{On types of non-integrable geometries}, 
Suppl. Rend. Circ. Mat. di Palermo Ser. II, 71 (2003), 99-113.
%
\bibitem[Fri80]{Friedrich80}
\bysame, \emph{Der erste {E}igenwert des {D}irac-{O}perators einer
  kompakten, {R}iemannschen {M}annigfaltigkeit nichtnegativer
  {S}kalarkr{\"u}mmung}, Math. Nachr. 97 (1980), 117-146.
%
\bibitem[FrI02]{Friedrich&I1}
Th. Friedrich and S. Ivanov, \emph{Parallel spinors and connections with
  skew-symmetric torsion in string theory}, Asian Journ. Math. 6 (2002), 
303-336.
%
\bibitem[Fu01]{Furuta01}
M. Furuta, \emph{Monopole equation and the $\frac{11}{8}$-conjecture},
Math. Res. Lett. 8 (2001), 279-291.
%
\bibitem[Jen75]{Jensen75}
G.~Jensen, \emph{Imbeddings of {S}tiefel manifolds into {G}rassmannians}, Duke
  Math. J. 42 (1975), 397-407.
%
\bibitem[Kir77]{Kirichenko77}
V. F. Kirichenko, \emph{$K$-spaces of maximal rank}, Mat. Zam. 22 (1977),
465-476.
%
\bibitem[KN91]{Kobayashi&N1}
S.~Kobayashi and K.~Nomizu, \emph{Foundations of differential geometry {I}},
  Wiley Classics Library, Wiley Inc., Princeton, 1963, 1991.
%
\bibitem[KN96]{Kobayashi&N2}
\bysame, \emph{Foundations of differential geometry {II}},
  Wiley Classics Library, Wiley Inc., Princeton, 1969, 1996.
%
\bibitem[LMP69]{LusztigMP69}
G. Lusztig, J. M. Milnor, G. Peterson, 
\emph{Semi-characteristics and cobordism},
Topology 8 (1969), 357-359.
%
\bibitem[MS74]{Milnor&S}
John W.~Milnor and James D.~Stasheff, \emph{Characteristic classes}, Princeton 
University Press, 1974.
%
\bibitem[Pae56]{Pae56}
G. Paechter, \emph{The groups $\pi_r(V_{n,m})$}, Quart. J. Math. Oxford 7
(1956), 249-268.
%
\bibitem[Serre70]{Serre70}
J.-P. Serre, \emph{Cours d'arithm\'etique}, Paris 1970.
%
\bibitem[Span66]{Span66}
E. Spanier, \emph{Algebraic topology}, Springer 1966.
%
\bibitem[Th68]{Thomas68}
E.~Thomas, \emph{Vector fields on low dimensional manifolds},
Math. Zeitschr. 103 (1968), 85-93. 
\end{thebibliography}
\end{document}